\documentclass[a4paper,12pt]{amsart}
\usepackage{amsmath,amssymb}
\usepackage[english]{babel}
\usepackage{graphicx}
\usepackage{caption}
\newtheorem{thm}{Theorem}
\newtheorem{lemma}[thm]{Lemma}
\newtheorem{prop}[thm]{Proposition}
\newtheorem{cor}[thm]{Corollary}
\newtheorem{exam}[thm]{Example}
\newtheorem{rem}[thm]{Remark}

\def\g{\gamma}
\def\ds{\displaystyle}
\begin{document}

\title{Geometric optimization problems
 generated by plane curves}

\author{Petar Kenderov}
\address{P. Kenderov\\Institute of Mathematics and Informatics\\Bulgarian Academy
of Sciences\\Acad. G. Bonchev 8, 1113 Sofia, Bulgaria}
\email{vorednek@gmail.com}

\author{Oleg Mushkarov}
\address{O. Mushkarov\\Institute of Mathematics and Informatics\\Bulgarian Academy
of Sciences\\Acad. G. Bonchev 8, 1113 Sofia, Bulgaria}
\email{muskarov@math.bas.bg}

\author{Nikolai Nikolov}
\address{N. Nikolov\\Institute of Mathematics and Informatics\\Bulgarian Academy
of Sciences\\Acad. G. Bonchev 8, 1113 Sofia, Bulgaria \vspace{1mm}
\newline Faculty of Information Sciences\\
State University of Library Studies and Information Technologies\\
Shipchenski prohod 69A, 1574 Sofia, Bulgaria}
\email{nik@math.bas.bg}

\thanks{The first-named author was supported by the Bulgarian
National Science Fund,
 under Grant No. KP-06-H92/6 (December 8, 2025).}

\thanks{The third-named author was partially supported by the Bulgarian
National Science Fund, Ministry of Education and Science of Bulgaria, under contract KP-06-N82/6.}

\keywords{Philo(n)'s line, concurrent perpendiculars, geometric optimization}

\subjclass[2020]{51M16, 52A20, 52A40, 49K05, 49K10}

\pagenumbering{arabic}

\begin{abstract}
Let $\g_1$ and $ \g_2$ be regular $C^1$-smooth curves in the plane
and $\g$ be a regular $C^2$-smooth curve in the same plane.
Consider all triples of points
$(A, A_1, A_2)$, $A\in \g$, $A_1\in \g_1$, $A_2\in \g_2$,  such that $A_1\neq A_2$, and the line  $A_1 A_2$ is the normal to $\g$ at $A$.
We show that, if $\g$ has non-vanishing curvature and the triple
$(A^0, A_1^0,A_2^0 )$ is a local
maximum or a local minimum for   the distance
$|A_1A_2|$ between the points $A_1$ and
$A_2$, then the following three lines either meet at a single point or are parallel: the normal to
$\g_1$ at $A_1^0$, the normal to $\g_2$ at $A_2^0$ and the line
which is perpendicular to $A_1^0A_2^0$, and passing through the center of curvature of $\g$ at $A^0$.

The particular case of this optimization problem,
when $\g$ is a circle
with a given center  $O$, coincides with the already partially studied  problem of finding the locally shortest (or the locally longest) non-degenerate segments
$[A_1A_2]$ such that $A_1\in\g_1$, $A_2\in\g_2$
 and $O\in A_1A_2$. We also show that the seemingly
different problem of finding the locally shortest (or the locally longest)
non-degenerate segments $[A_1A_2]$,  such that
 $A_1 \in \g_1$, $A_2 \in \g_2$, and the
 line $A_1A_2$ is tangent to $\g$ is also, in essence, a particular case of the above optimization problem.
 We consider in detail the ``degenerate cases'' naturally appearing in this setting
(when, for instance,  $\g_1$ or $\g_2$ coincide with $\g$, or when the optimal line $A_1^0A_2^0$ is tangent to at least one of
$\g_1$ or $\g_2$).

\end{abstract}

\maketitle

\section{Introduction}\label{si}

Let $\g_1$ and $ \g_2$ be  regular $C^1$-smooth plane curves
and $\g$ be a regular $C^2$-smooth curve in the same plane.
We call a triple of points $(A, A_1, A_2)$ {\it admissible} if $A\in \g$,
$A_1\in \g_1$, $A_2\in \g_2$,
$A_1\neq A_2$,
and the line  $A_1 A_2$ is the normal to $\g$ at  $A$.
We say that an admissible triple  $(A, A_1, A_2)$,
 is a point of {\it local maximum} ({\it local minimum})
if there exist open sets $U \ni A$,
 $U_1 \ni A_1$,
$U_2 \ni A_2$ such that for every admissible triple
$(A', A'_1, A'_2)$ with $A' \in U'$,
 $A'_1 \in U_1$,
$A'_2 \in U_2$, the following inequality holds:
$|A_1A_2| \geq |A'_1A'_2|$ ($|A_1A_2| \leq |A'_1A'_2|$). The natural optimization problem that appears here is

\medskip

{\it  Find necessary conditions for a given admissible
triple $(A, A_1, A_2)$ to be a point of local minimum or a point of local maximum.}

\medskip

We show that, if the admissible triple
$(A, A_1,A_2)$ is a point of local
extremum  (maximum or minimum)
and the curvature of $\g$ at $A$ is not equal to zero, then the following three
lines either have a common point or are parallel: the normal $n_{A_1}$ to
$\g_1$ at $A_1$, the normal $n_{A_2}$ to $\g_2$ at $A_2$, and the line
$n_{C_{A}}$ perpendicular to $A_1A_2$ and passing through the center of curvature $C_{A}$ of $\g$ at $A$. We call this necessary condition for optimality the {\it Three Concurrent Normals Property} and refer to it, in this Introduction, as the 3-CNP.

\medskip

If $\g$ is a circle
with center  $O$, then the normals to $\g$ contain the point $O$, and the considered optimization problem reduces to the following one:

\medskip

{\it Find necessary conditions for a non-degenerate segment
$[A_1A_2]$ with $A_1\in\g_1$, $A_2\in\g_2$,
and $O \in A_1A_2$, to be locally shortest (or locally longest)}

\medskip

\noindent Since $O$ is the center of curvature for all points of the circle $\g$, the 3-CNP now takes the form: for every locally optimal  (maximal or minimal) segment
$[A_2A_2]$ the normal $n_{A_1}$ to
$\g_1$ at $A_1$, the normal $n_{A_2}$ to $\g_2$ at $A_2$, and the line $n_O$ through $O$
perpendicular to $A_1A_2$ either intersect at one point or are parallel.

The particular case of this latter problem, when the curves
$\g_1$ and $\g_2$ are the arms of an angle, $O$ is an interior point for this angle, and one looks for the shortest segment
$[A_1A_2]$ cut from the angle by a line $A_1A_2$ containing $O$, is trully classical.
Neuberg  \cite{NBRG} (1907) mentions that this problem can be found in  Newton's
{\it Opuscules, t.I, p.87}.
It has also been considered by many other authors.
Among the earliest of them we mention here  L'Huilier \cite{SL,SL1}(1795, 1811), Neovius \cite{EN}(1887) and  J. Casey \cite{JCas} (1886).
In this simple minimization problem, the 3-CNP is not only a necessary but also a sufficient condition for the optimality of the line $A_1A_2$. Another, more widely known, characterization of the optimal line $A_1A_2$ for this ``angle problem'' is the following {\it isotomic property}: the foot of the perpendicular from the vertex of the angle to $A_1A_2$ and the point $O$ are symmetric to each other with respect to the middle of the segment $[A_1A_2]$. A line $A_1A_2$ with this isotomic property is frequently called ``Philon line'' (or ``Philo's line'') because the ancient
engineer Philon, who lived in the 3rd century BC, used a line with this property to provide one more solution to the problem of cube doubling  (see \cite{Wiki1} and \cite{Wiki2}). As shown by Coxeter and van de Craats in \cite{CC}, this
isotomic property is also valid for angles (and points) in the spherical and hyperbolic planes.

\medskip

Another particular case of the above problem is when both
$\g_1$ and $\g_2$ coincide with the boundary of a closed and bounded convex set in the plane, and $O$ is a point from its interior.
If we are looking for a locally shortest segment
$[A_1A_2]$ cut from the convex set by a line passing
through $O$, then 3-CNP remains valid  (with $n_{C_{A}}$ replaced by $n_O$) for every optimal line, even without
any smoothness assumptions imposed on the boundary of the convex set. It should be noted, however, that, in this case, the local minimality of the segment
$[A_1A_2]$ implies smoothness of the boundary of the convex set at
the endpoints $A_1$ and $A_2$.  In particular, if the convex set is a polygon, then the endpoints of any locally minimal segment
 $[A_1A_2]$ are not among the vertices of the polygon.

For the related maximization problem in this ``point-convex'' setting, one does need the smoothness of the boundary of the convex set to prove that 3-CNP is valid for any locally optimal line. For more information about this ``point-convex'' particular  case of the problem considered in this article, as well as for its natural multidimensional generalizations,  the reader is referred to \cite{KMN}(2027).

\medskip

Returning to the ''three curves situation'' with
$\g_1, \g_2$, and $ \g$, one may consider all lines $A_1A_2$, where $A_1 \in \g_1$, $A_2 \in \g_2$, which are tangent to $\g$, and look for the shortest segment $[A_1A_2]$.
At the end of his article \cite{EVES}(1959), Howard Eves mentions this problem as a possible generalization of the angle problem considered above and notes that for any optimal line $A_1A_2$, the three normals  $n_{A_1}$ to
$\g_1$ at $A_1$, $n_{A_2}$ to $\g_2$ at $A_2$, and the normal
$n_{C}$ to $\g$ at  the point $C$ where the line $A_1A_2$ touches $\g$ are concurrent. He adds ``This generalization seems to be due to Isaak Newton'' without giving further information under what smoothness requirements this had been proved and where it had been published. We show in this article that this ``tangential'' minimization problem, as well as its maximization counterpart, is closely related, and even equivalent (under some additional smoothness assumptions), to the optimization problem formulated at the beginning of the paper.

 \medskip

 A significant part of what follows below is devoted to the detailed consideration of the
 ``degenerate cases'' naturally appearing in this setting
(when, for instance, $\g_1$ or $\g_2$ coincides with $\g$ or when the optimal line $A_1A_2$ is tangent to one or both $\g_1$ and $\g_2$).

\medskip

To prove the above-mentioned results, we reparametrize the curves $\g_1$ and $\g_2$ by the
natural arc-length parameter $s$ of the curve $\g$
in such a coordinated way that, for any $s$,  the line $A_1(s)A_2(s)$, where $A_i(s) \in \g_i, i=1,2,$ is the point corresponding to the parameter $s$ in the newly parametrized curve $\g_i$, is the normal to $\g$ at $A(s) \in \g$. Then, we use the standard necessary condition for optimality: {\it the derivative with respect to $s$ of the corresponding distance function is zero}.
The assumptions under which the desired reparametrization is possible are presented in Section 2. Section 3 contains the proofs of the
main results. The role of Section 4 (the short Appendix) is to demonstrate that, when the curvature of
$\g$ is not vanishing, the angle $\theta$ a line tangent to $\g$ concludes with the abscissa axis (or any other fixed line) is also a natural parameter for the curve $\g$. The derivatives of the involved distances with respect to $\theta$ sometimes deliver simpler formulas that make the geometric basis for the validity of 3-CNP and some other geometric properties more transparent.

\section{Notations and auxiliary results}\label{au}

In what follows by a $C^k$-smooth curve ($k\in\Bbb N \cup
\{{\infty}\}$) we will mean a $C^k$-smooth mapping
$\g:\Delta\to\Bbb R^2$
where $\Delta \subset
\mathbb{R}$ is an open interval and $\g\,'(t) \neq 0$ for every $t \in \Delta$
(i.e. the curves we consider are ``regular'').
According to a common practice, we sometimes identify $\g$ with its image $\{\g(t): t \in \Delta \}$ in $\Bbb R^2$.

Each curve $\g(t)$, $t \in \Delta$, can be represented in the form
$\g(t)=(p(t),q(t))$ where $p(t)$ and $q(t)$ are the coordinates of the point $\g(t)$. Then the {\it tangent to $\g$} at the point $\g(t_0)=(p(t_0), q(t_0)), t_0 \in \Delta$, is the line $t_{\g(t_0)}$ through
$\g(t_0)$ which is parallel to the nonzero {\it  tangent vector}
$T(t_0) := (p\,'(t_0), q\,'(t_0))= \g\,'(t_0)$ at the same point,  i. e.
$$t_{\g(t_0)}=\{\g(t_0) + \lambda T(t_0) : \lambda \in \Bbb R\}.$$

The {\it normal to $\g$} at $\g(t_0)$  is the line
$n_{\g(t_0)}$ through  $\g(t_0)$ that is parallel to the {\it normal vector} $N(t_0):=(- q\,'(t_0), p\,'(t_0))$ at  the point $\g(t_0)$, i. e.
$$n_{\g(t_0)}=\{\g(t_0) + \lambda N(t_0) : \lambda \in \Bbb R\}.$$
Clearly, $n_{\g(t_0)}$ is perpendicular to $t_{\g(t_0)}$ and  consists of the points $(x,y) \in \Bbb R^2$ such that

\begin{equation}\label{normala}
(x - p(t_0))p\,'(t_0) + (y - q(t_0))q\,'(t_0) = 0.
\end{equation}

Recall that the signed curvature $\kappa_{\g}(t)$ at some point $\g(t)=(p(t),q(t))$, $t \in \Delta$, of a $C^2$-smooth
plane curve  $\g$  is given by

\begin{equation}\label{1}
\kappa_{\g}(t)=\frac{p\,'(t)q\,''(t)-p\,''(t)q\,'(t)}{(p\,'(t)^2+q\,'(t)^2)^{3/2}} .
\end{equation}


If $s$ is the arc-length parameter of $\g$, then
\begin{equation}\label{2}
\kappa_{\g}(s)=p\,'(s)q\,''(s)-q\,'(s)p\,''(s)
\end{equation}
since $p\,'(s)^2+q\,'(s)^2=1$. If
$\kappa_{\g}(s_0)\neq 0$ then the center of
curvature $C_{\g(s_0)}$ of $\g$ at $\g(s_0)$ is the point
\begin{equation}\label{3}
C_{\g(s_0)}=(p(s_0)-\frac{q\,'(s_0)}{\kappa_{\g}(s_0)} \ , \
q(s_0)+\frac{p\,'(s_0)}{\kappa_{\g}(s_0)}).
\end{equation}
The center of curvature $C_{\g(s_0)}$ lies on the normal $n_{\g(s_0)}$ to $\g$ at
$\g(s_0)$ and the distance between $\g(s_0)$
and $C_{\g(s_0)}$ is equal to $|\frac{1}
{\kappa_{\g }(s_0)}|$ because $N(s_0)$ is a vector of lenght one.

Let $\g_1(t)$, $t \in \Delta_1$, and $\g(s)$, $s \in \Delta$, be $C^1$-smooth curves
with  $s$ being the arc-length parameter of  $\g$.  Consider the function
\begin{equation}\label{G}
G(t,s)=(p_1(t)-p(s))p\,'(s)+(q_1(t)-q(s))q\,'(s)
\end{equation}
defined in $\Delta_1 \times \Delta$.

The next simple statement summarizes some of the relations between the function $G$ and its partial derivatives
 $G'_t$ and $G'_s$ on the one hand, and the geometric essence of the problems considered in this paper on the other.

\begin{lemma}\label{l1}
(i) A pair $(t_0, s_0)$ is a solution to the equation $G(t,s)=0$ if and only if $\g_1(t_0) \in n_{\g(s_0)}$.

(ii) A pair $(t_0, s_0)$ is a solution to the system of two equations $G(t,s)=0$,
$G'_t(t,s)=0$ if and only if
$n_{\g(s_0)} = t_{\g_1(t_0)}$.

(iii)  If $\g(s)$, $s \in \Delta$, is a $C^2$-smooth curve then a pair $(t_0, s_0)$ is a solution to the system of two equations $G(t,s)=0$,
$G'_s(t,s)=0$ if and only if $\kappa_{\g}(s_0) \neq 0$ and $\g_1(t_0) = C_{\g(s_0)}$.

\end{lemma}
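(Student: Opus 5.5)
The plan is to compute the two partial derivatives of $G$ directly and read off their geometric meaning, using the normal equation \eqref{normala} and the curvature formulas \eqref{2}, \eqref{3}. Part (i) is then immediate. Since $s$ is the arc-length parameter of $\g$, the tangent vector $(p'(s),q'(s))$ is nonzero. By \eqref{normala}, the condition $G(t_0,s_0)=0$ says exactly that the point $\g_1(t_0)=(p_1(t_0),q_1(t_0))$ satisfies the equation of $n_{\g(s_0)}$.

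For (ii), differentiation gives
\[
G'_t(t,s)=p_1'(t)p'(s)+q_1'(t)q'(s),
\]
the scalar product of the tangent vector of $\g_1$ at $t$ with the unit tangent vector of $\g$ at $s$. Thus $G'_t(t_0,s_0)=0$ means that $\g_1'(t_0)$ is orthogonal to $\g'(s_0)$. Since $\g_1'(t_0)\neq 0$, this is equivalent to $\g_1'(t_0)$ being parallel to $N(s_0)$, i.e. the tangent line $t_{\g_1(t_0)}$ is parallel to the normal $n_{\g(s_0)}$. Combined with (i), the two lines are parallel and share the point $\g_1(t_0)$, so they coincide. Conversely, if $n_{\g(s_0)}=t_{\g_1(t_0)}$, then $\g_1(t_0)$ lies on $n_{\g(s_0)}$, which gives $G=0$, and $\g_1'(t_0)$ is parallel to $N(s_0)$, which gives $G'_t=0$.

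For (iii), differentiating in $s$ and using $p'^2+q'^2=1$ gives
\[
G'_s(t,s)=-1+(p_1(t)-p(s))p''(s)+(q_1(t)-q(s))q''(s).
\]
The key observation is that, by (i), $G(t_0,s_0)=0$ means $\g_1(t_0)-\g(s_0)=\lambda N(s_0)$ for some $\lambda\in\Bbb R$, where $N(s_0)=(-q'(s_0),p'(s_0))$ is a unit vector. Substituting this into $G'_s$ yields
\[
G'_s(t_0,s_0)=-1+\lambda\bigl(-q'(s_0)p''(s_0)+p'(s_0)q''(s_0)\bigr)=-1+\lambda\kappa_{\g}(s_0)
\]
by \eqref{2}. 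Hence $G'_s=0$ holds if and only if $\lambda\kappa_{\g}(s_0)=1$. This forces $\kappa_{\g}(s_0)\neq 0$ and $\lambda=1/\kappa_{\g}(s_0)$, and by \eqref{3} this says exactly that $\g_1(t_0)=\g(s_0)+N(s_0)/\kappa_{\g}(s_0)=C_{\g(s_0)}$. The converse runs the same computation backwards: if $\g_1(t_0)=C_{\g(s_0)}$, then $\g_1(t_0)$ lies on the normal, so $G=0$ with $\lambda=1/\kappa_{\g}(s_0)$, and then $G'_s=0$.

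The only step needing care is (iii): one must expand $G'_s$ correctly, including the $-p'^2-q'^2=-1$ term, and then substitute the normal-line representation. After that, all three parts are routine.
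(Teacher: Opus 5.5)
Your proof is correct and takes essentially the paper's route: compute $G'_t$ and $G'_s$ directly and read them through the normal equation and the curvature formulas. The only difference is in (iii): you first write $\g_1(t_0)-\g(s_0)=\lambda N(s_0)$ and use (\ref{2}) to get $G'_s=\lambda\kappa_{\g}(s_0)-1$, whereas the paper uses the Frenet--Serret formula to obtain $G'_s=\kappa_{\g}\langle \g_1-C_{\g(s_0)},N\rangle$ (valid at every $(t,s)$ once $\kappa_{\g}\neq 0$, and reused in the Appendix) and then combines it with the collinearity from (i).
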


\begin{proof}
Item (i) follows trivially from
the definition of the normal $n_{\g(s_0)}$ in terms of equation (\ref{normala}).

The relation
$ G'_t(t_0,s_0)= p\,'_1(t_0)p\,'(s_0)+q\,'_1(t_0)q\,'(s_0)=0 $ shows that the non-zero tangent vectors
$\g\,'_1(t_0)=(p\,'_1 (t_0), q\,'_1(t_0))$ and
$\g\,'(s_0)=(p\,'(s_0), q\,'(s_0))$ are orthogonal to each other. Combined with the fact that $\g_1(t_0) \in n_{\g(s_0)}$, this yields (ii).

To prove (iii), we note that
\begin{equation}\label{4}
G'_s(t_0,s_0)=
(p_1(t_0)-p(s_0))p\,''(s_0)+(q_1(t_0)-q(s_0))q\,''(s_0)-1
\end{equation}
 because $p\,'(s_0)^2+ q\,'(s_0)^2=1$.
 If $G'_s(t_0,s_0)=0$, equation (\ref{4}) implies that the vector
 $\g\,''(s_0)=(p\,''(s_0), q\,''(s_0)) \neq (0,0)$.
 Hence,
 $\kappa_{\g}(s_0) \neq 0$ and the point $C_{\g(s_0)}$ exists.

Taking into account the Frenet-Serret formula
\begin{equation}\label{5}
(p\,''(s),q\,''(s)) = \kappa_{\g}(s)(-q\,'(s), p\,'(s))
\end{equation}
we get
\begin{equation}\label{6}
G'_s(t_0,s_0)=
\kappa_{\g}(s_0)[(p_1(t_0)-p(s_0))(-q\,'(s_0))+(q_1(t_0)-q(s_0))p\,'(s_0)]-1
\end{equation}
Having in mind (\ref{3}), i.e. that $C_{\g(s_0)} - \g(s_0) = \frac{1}{\kappa_{\g}(s_0)}N(s_0)$,
this equation can be rearranged in terms of the scalar product of vectors as

\begin{equation}\label{7}
G'_s(t_0,s_0)=\kappa_{\g}(s_0) \langle(\g_1(t_0) - \g(s_0)), N(s_0)\rangle - 1= \kappa_{\g}(s_0) \langle(\g_1(t_0) - C_{\g(s_0)}), N(s_0)\rangle.
\end{equation}
This suffices to complete the proof of item (iii) because the vectors $N(s_0)$, $\overrightarrow{C_{\g(s_0)}\g_1(t_0)}$, and
$\overrightarrow{\g(s_0)\g_1(t_0)}$ are colinear if and only if $G(t_0,s_0)=0$ (follows from item (i) ).
\end{proof}

Next, we provide sufficient conditions under which a curve
$\g_1(t), t \in \Delta_1$, can be locally reparametrized by the  arc-length parameter $s$ of another curve $\g(s), s \in \Delta$, and,
vice-versa, the curve $\g(s)$ can be locally reparametrized by the parameter $t$ of the curve $\g_1(t)$.

\begin{prop} \label{nnorm1}
 Let $\g_1(t)$, $t \in \Delta_1$, and $\g(s)$,
 $s \in \Delta$, be  $C^1$-smooth curves where  $s$ is the arc-length parameter of
 $\g$. Let $A_1= \g_1(t_0)$ and $A= \g(s_0)$ be such that $A_1 \in n_A$.

(i) If $n_A$ is not tangent to $\g_1$ at $A_1$,
then there exist an open interval $U \subset \Delta$ containing $s_0$, and a unique $C^1$-smooth function $t_1(s)$ defined in $ U$ with values in
$\Delta_1$ such that $t_1(s_0) = t_0$ and for every
$s \in U$ the point $\g_1(t_1(s))$ belongs to the normal $n_{\g(s)}$.

(ii) If $\g(s)$, $s \in \Delta$, is a $C^2$-smooth curve and either $\kappa_{\g}(A)=0$ or $A_1\neq C_A$, then there exist an open interval $V \subset \Delta_1$ containing $t_0$, and a unique $C^1$-smooth function $s_1(t)$ defined in $V$ with values in
$\Delta$ such that $s_1(t_0) = s_0$ and for every
$t \in V$ the point $\g_1(t)$ belongs to the normal $n_{\g(s_1(t))}$.
\end{prop}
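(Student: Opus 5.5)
Both parts follow from the Implicit Function Theorem applied to the function $G(t,s)$ from (\ref{G}), combined with Lemma \ref{l1}. By Lemma \ref{l1}(i), the condition $\g_1(t)\in n_{\g(s)}$ is equivalent to $G(t,s)=0$. So in (i) we need to solve $G(t,s)=0$ for $t$ as a function of $s$ near $(t_0,s_0)$, and in (ii) for $s$ as a function of $t$. First we check regularity of $G$. Since $\g_1$ is $C^1$ and $\g$ is $C^1$ with $\g'(s)=(p'(s),q'(s))$, we have $G'_t(t,s)=p_1'(t)p'(s)+q_1'(t)q'(s)$, which is continuous. Thus $G$ is $C^1$ in $t$ jointly continuously. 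For part (i) we still need $G$ to be $C^1$ in both variables jointly, and the derivative in $s$ involves $p'',q''$. So in case (i), with only $C^1$ data, I would use the version of the Implicit Function Theorem that needs only continuity of $G$ and continuity of $G'_t$ with $G'_t(t_0,s_0)\neq 0$. This version yields a unique continuous solution $t_1(s)$.

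For (i), $G(t_0,s_0)=0$ holds by hypothesis. Suppose $G'_t(t_0,s_0)=0$. Then Lemma \ref{l1}(ii) gives $n_A=t_{\g_1(t_0)}$, i.e. $n_A$ is tangent to $\g_1$ at $A_1$, contradicting the assumption. Hence $G'_t(t_0,s_0)\neq0$, and the Implicit Function Theorem gives an interval $U$ and a unique $t_1(s)$ with $t_1(s_0)=t_0$ and $G(t_1(s),s)=0$. This is exactly $\g_1(t_1(s))\in n_{\g(s)}$.

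To get $C^1$-smoothness of $t_1$ without differentiating $p'$, I would argue geometrically. The solution curve is the locus where the point $\g_1(t)$ lies on the normal line at $\g(s)$. Equivalently, $\g_1(t_1(s))$ is the intersection of $\g_1$ with a moving line $n_{\g(s)}$. The normal direction $N(s)$ varies only continuously in the $C^1$ case, which is a weak point of this route. The cleaner alternative is to note the paper's later use: in the main results $\g$ is $C^2$, and then $G$ is genuinely $C^1$ and the standard theorem gives $t_1\in C^1$ directly. This regularity issue is the main obstacle. I would handle it by rewriting $G$ in terms of the line through $\g(s)$ with direction $N(s)$, and noting that the intersection parameter is a quotient of determinants. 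If needed, I would fall back on assuming $\g\in C^2$, as everywhere the proposition is applied.

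For (ii), $\g$ is $C^2$, so $G$ is $C^1$ in both variables. We need $G'_s(t_0,s_0)\neq0$. If it were zero, then together with $G(t_0,s_0)=0$, Lemma \ref{l1}(iii) would give $\kappa_\g(s_0)\neq0$ and $A_1=C_A$. This contradicts the hypothesis that $\kappa_\g(A)=0$ or $A_1\neq C_A$. The Implicit Function Theorem then yields an interval $V\ni t_0$ and a unique $C^1$ function $s_1(t)$ with $s_1(t_0)=s_0$ and $G(t,s_1(t))=0$, i.e. $\g_1(t)\in n_{\g(s_1(t))}$. Uniqueness in both parts is the local uniqueness clause of the Implicit Function Theorem, after shrinking the intervals suitably.
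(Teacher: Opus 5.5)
Your treatment of (ii), and of (i) once $\g$ is $C^2$, is exactly the paper's proof. The paper gets $G(t_0,s_0)=0$ from Lemma~\ref{l1}(i), gets the nonvanishing of the relevant partial derivative from Lemma~\ref{l1}(ii) resp.\ (iii), and finishes with the implicit function theorem.

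The regularity issue you raise in (i) is real, and it is a gap in the paper's own argument. The paper invokes the $C^1$ implicit function theorem, although for $\g\in C^1$ the function $G$ is only continuous in $s$, since $G'_s$ involves $p''$ and $q''$. You should not try to rescue the $C^1$ conclusion with a quotient-of-determinants argument, because for merely $C^1$-smooth $\g$ statement (i) is false.

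Here is a counterexample.
\begin{itemize}
\item Take $\g_1(t)=(t,0)$, and let $\g$ be the graph of $y=1+\frac12 x|x|$ near $x=0$, reparametrized by arc length. Since $ds/dx=\sqrt{1+x^2}$, $x$ is a smooth function of $s$ with $dx/ds=1$ at $s=0$.
\item The normal at $(x,1+\frac12 x|x|)$ has direction $(-|x|,1)$. It meets $\g_1$ only at $t=x+|x|(1+\frac12 x|x|)$, whose one-sided derivatives at $x=0$ are $2$ and $0$.
\item At $A=(0,1)$, $A_1=(0,0)$ all hypotheses of (i) hold. Yet the unique function $t_1$ with $\g_1(t_1(s))\in n_{\g(s)}$ is not differentiable at $s=0$.
\end{itemize}
Any determinant formula for the intersection point contains $N(s)$, which is only continuous, so it cannot give more regularity than this.

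The correct picture is already in your proposal. For $\g\in C^1$, the continuous version of the implicit function theorem gives a unique \emph{continuous} $t_1$; it needs only continuity of $G$ and of $G'_t$, together with $G'_t(t_0,s_0)\neq 0$. For $\g\in C^2$, $G$ is jointly $C^1$ and the standard theorem gives a $C^1$ function $t_1$.

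Your fallback of assuming $\g\in C^2$ in (i) is the right fix, and it costs nothing downstream. Corollary~\ref{homo}, Theorem~\ref{A}, Theorem~\ref{B} (through the $C^2$ involute) and Corollary~\ref{cor4} all already have $\g\in C^2$. Only the $C^1$ half of Proposition~\ref{nnorm2}, which refers to $t_1'(s_0)$, needs the same extra hypothesis.
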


\begin{proof}
(i) As $A_1 \in n_A$ we get from item (i) of Lemma \ref{l1} that
$G(t_0,s_0)=0$. Since $n_A$ is not tangent to $\g_1$ at $A_1$ we get from item (ii) of Lemma \ref{l1} that $G'_t(t_0,s_0) \neq 0$.
The implicit function theorem provides us with an open interval $U$, $s_0 \in U \subset \Delta$, and with a unique continuously differentiable function $t_1(s)$ defined in $U$ with values in
$\Delta_1$ such that $t_1(s_0)=t_0$ and $G(t_1(s), s) =0$ for every $s \in U$. This means,  $\g_1(t_1(s))$ belongs to the normal $n_{\g(s)}$ for every $s \in U$. The proof of item (i) is completed.

The proof of (ii) is similar. If $\kappa_{\g}(A)=0$ or $A_1\neq C_A$, we get from the proof of item (iii), Lemma \ref{l1},  that in both cases $G'_s(t_0,s_0) \neq 0$. Using the implicit function theorem again, we get an open interval $V $, $t_0 \in V \subset \Delta_1$, and a unique continuously differentiable function $s_1(t)$ defined in $V$ with values in $\Delta$ such that $s_1(t_0)=s_0$ and
$G(t,s_1(t)) =0$ for every $t\in V$.
This means, $\g_1(t)$ belongs to the normal $n_{\g(s_1(t))}$ for every $t \in V$. This completes the proof of (ii).

\end{proof}

\begin{prop}\label{nnorm2}
 Let $\g_1(t)$, $t \in \Delta_1$, and $\g(s)$,
 $s \in \Delta$, be  $C^1$-smooth curves (with  $s$ being the arc-length parameter of
 $\g$). Let the points $A_1= \g_1(t_0)$ and $A= \g(s_0)$ be such that $A_1 \in n_A$.

If $n_A$ is not tangent to $\g_1$  at $A_1$, then  the following properties are equivalent:

(a) The derivative $t_1'(s_0) \neq 0$ where
$t_1: U \rightarrow \Delta_1$ is the $C^1$-smooth function from item (i) of Proposition
\ref{nnorm1};

(b) There exists an open subinterval $U'$, $s_0 \in U'\subset U$, on which $t_1$ is a difeomorphism (i.e. the restriction of $t_1$ on $U'$ is invertible and its inverse $t_1^{-1}: t(U') \rightarrow U' $  is a  $C^1$-smooth mapping).

If $\g$ is a $C^2$-smooth curve, then each of the items (a) and (b) is equivalent to the next property

(c)  Either $C_A$ does not exist (i.e.
$\kappa_{\g}(A)=0$) or $A_1\neq C_A$.

\end{prop}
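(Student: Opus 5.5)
The plan is to prove (a)$\Leftrightarrow$(b) with the one-dimensional inverse function theorem. Then, assuming $\g$ is $C^2$, I prove (a)$\Leftrightarrow$(c) by differentiating the identity $G(t_1(s),s)=0$ and applying item (iii) of Lemma \ref{l1}.

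For (a)$\Rightarrow$(b): $t_1$ is $C^1$ on $U$ and $t_1'(s_0)\neq 0$. By continuity $t_1'$ keeps a constant sign on some open interval $U'\ni s_0$. So $t_1$ is strictly monotone there, and $t_1(U')$ is an open interval. The inverse function theorem in one variable then gives that $t_1^{-1}$ is $C^1$, with $(t_1^{-1})'=1/t_1'(t_1^{-1}(\cdot))$.

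For (b)$\Rightarrow$(a): differentiate $t_1^{-1}(t_1(s))=s$ at $s_0$ by the chain rule. This gives $(t_1^{-1})'(t_0)\,t_1'(s_0)=1$, hence $t_1'(s_0)\neq0$.

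For (a)$\Leftrightarrow$(c) with $\g\in C^2$: the function $G$ from (\ref{G}) is then $C^1$ in both variables. Since $G(t_1(s),s)\equiv0$ on $U$, differentiating at $s_0$ gives
\[
G'_t(t_0,s_0)\,t_1'(s_0)+G'_s(t_0,s_0)=0.
\]
The hypothesis that $n_A$ is not tangent to $\g_1$ at $A_1$ gives $G'_t(t_0,s_0)\neq0$, by item (ii) of Lemma \ref{l1}. Hence $t_1'(s_0)=-G'_s(t_0,s_0)/G'_t(t_0,s_0)$, so $t_1'(s_0)\neq0$ if and only if $G'_s(t_0,s_0)\neq0$.

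Because $G(t_0,s_0)=0$, item (iii) of Lemma \ref{l1} says that $G'_s(t_0,s_0)=0$ holds exactly when $\kappa_\g(s_0)\neq0$ and $A_1=C_A$. Negating, $G'_s(t_0,s_0)\neq 0$ if and only if $\kappa_\g(A)=0$ or $A_1\neq C_A$, which is (c).

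No step here is a real obstacle. The one point needing care is regularity: differentiating the composite $G(t_1(s),s)$ needs $G$ to be $C^1$ in $s$, and this requires $\g\in C^2$ (see (\ref{4})). This is exactly why (c) is only claimed to be equivalent under the $C^2$ assumption.
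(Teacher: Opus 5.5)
Your proposal is correct and follows essentially the same route as the paper: (a)$\Rightarrow$(b) comes from a non-vanishing derivative and strict monotonicity, and (a)$\Leftrightarrow$(c) comes from differentiating $G(t_1(s),s)=0$ at $s_0$ and invoking items (ii) and (iii) of Lemma~\ref{l1}. The only difference is that you also write out the chain-rule argument for (b)$\Rightarrow$(a), which the paper leaves implicit.
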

\begin{proof} If $t_1'(s_0) \neq 0$  then $t_1'(s) \neq 0$ for all $s$ from some sub-interval $U'$, $s_0 \in U'\subset U$. This means, the mapping $t_1:U' \to t_1(U')$ is strictly monotone in $U'$ and, therefore, invertible. Moreover, since $t_1$ is continuously differentiable with non-vanishing derivative, its inverse
$t_1^{-1}:t_1(U') \rightarrow U'$ is continuously differentiable as well, and has a non-vanishing derivative, i.e., (a) and (b) are equivalent.

We show now that (a) and (c) are equivalent if $\g$ is a $C^2$-smooth curve.
By assumption,
$n_A$ is not tangent to $\g_1$ at $A_1$. Hence, by item (ii) of Lemma \ref{l1} we have $G'_t(t_0,s_0) \neq 0$. Consider the function $t_1(s)$ from item (i) of Proposition \ref{nnorm1} and differentiate the identity  $G(t_1(s), s) =0$. We get, for $s=s_0$, the equation

\begin{equation}\label{9}
G'_t(t_1(s_0),s_0)t_1'(s_0) + G'_s(t_1(s_0),s_0)=G'_t(t_0,s_0)t_1'(s_0) + G'_s(t_0,s_0) = 0.
\end{equation}

It follows that $t'_1(s_0) \neq 0$ if and only if $G_s'(t_0,s_0) \neq 0$, which, by virtue of item (iii) of Lemma \ref{l1}, is equivalent to  (c).
\end{proof}
The next statement is a ``mirror version'' of the previous one, and its proof is omitted.

\begin{prop} \label{nnorm3}
Let $\g_1$ be a $C^1$-smooth curve, $\g$ be a $C^2$-smooth curve, $A =\g(s_0)$, and
$A_1=\g_1(t_0)$.
If either $\kappa_{\g}(s_0)=0$ or $A_1\neq C_A$ then  the following properties are equivalent:

(a) The derivative $s_1'(t_0) \neq 0$ where  $s_1: V \rightarrow \Delta$ is the $C^1$-smooth function from item (ii) of Proposition
\ref{nnorm1};

(b) there ixists an open subinterval $V'$, $s_0 \in V'\subset V$, on which $s_1$ is a difeomorphism (i.e. the restriction of $s_1$ on $V'$ is invertible and its inverse $s_1^{-1}: s(V') \rightarrow V' $  is a  $C^1$-smooth mapping);

(c) $n_A$ is not  tangent to $\g_1$  at $A_1$

\end{prop}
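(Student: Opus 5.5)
The proof mirrors that of Proposition \ref{nnorm2}, with the roles of the variables $t$ and $s$ interchanged. The hypothesis that either $\kappa_{\g}(s_0)=0$ or $A_1\neq C_A$ gives $G'_s(t_0,s_0)\neq 0$, by the computation (\ref{7}) in the proof of item (iii) of Lemma \ref{l1}. This is exactly what item (ii) of Proposition \ref{nnorm1} needs, so the $C^1$-smooth function $s_1:V\to\Delta$ with $s_1(t_0)=s_0$ and $G(t,s_1(t))=0$ on $V$ exists. (Implicitly $A_1\in n_A$, i.e. $G(t_0,s_0)=0$, as in the previous propositions.)

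For (a)$\Leftrightarrow$(b) I argue as in Proposition \ref{nnorm2}. Suppose $s_1'(t_0)\neq 0$. Since $s_1'$ is continuous, it has constant sign on some subinterval $V'\ni t_0$. Then $s_1$ is strictly monotone on $V'$, hence invertible there, and the inverse function theorem in one variable gives a $C^1$ inverse. Conversely, suppose $s_1$ is a diffeomorphism near $t_0$ with a $C^1$ inverse. Differentiating $s_1^{-1}\circ s_1=\mathrm{id}$ at $t_0$ gives $(s_1^{-1})'(s_0)\,s_1'(t_0)=1$, so $s_1'(t_0)\neq0$. The same chain-rule remark also settles this converse direction in Proposition \ref{nnorm2}.

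For (a)$\Leftrightarrow$(c) I differentiate the identity $G(t,s_1(t))=0$ at $t=t_0$, which gives
$$G'_t(t_0,s_0)+G'_s(t_0,s_0)\,s_1'(t_0)=0.$$
Because $G'_s(t_0,s_0)\neq0$, we get $s_1'(t_0)\neq 0$ if and only if $G'_t(t_0,s_0)\neq0$. By item (ii) of Lemma \ref{l1}, together with $G(t_0,s_0)=0$, the vanishing of $G'_t(t_0,s_0)$ is equivalent to $n_{\g(s_0)}=t_{\g_1(t_0)}$, i.e. to $n_A$ being tangent to $\g_1$ at $A_1$. Hence (a) is equivalent to (c).

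There is no real obstacle here. The only points needing care are to state explicitly that $G'_s(t_0,s_0)\neq 0$ follows from the hypothesis via (\ref{7}), and that the standing assumption $A_1\in n_A$ is needed to apply Lemma \ref{l1}.
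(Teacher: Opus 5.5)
Your proposal is correct and is exactly the mirror of the proof of Proposition \ref{nnorm2} that the paper intends; the paper omits the proof for that reason. In both, the hypothesis gives $G'_s(t_0,s_0)\neq 0$ via (\ref{7}), and differentiating $G(t,s_1(t))=0$ together with item (ii) of Lemma \ref{l1} yields (a)$\Leftrightarrow$(c). Your chain-rule argument for (b)$\Rightarrow$(a) is a useful addition, since the paper's proof of Proposition \ref{nnorm2} only argues the direction (a)$\Rightarrow$(b) explicitly.
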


\begin{cor} \label{homo}
 Let $\g_1(t)$, $t \in \Delta_1$ be a $C^1$-smooth curve, $\g(s)$,  $s \in \Delta$, be  $C^2$-smooth curve and let $A_1= \g_1(t_0)$ and $A= \g(s_0)$ be such that $A_1 \in n_A$.

If either $\kappa_{\g}(A) = 0$ or $A_1\neq C_A$, and $n_A$ is not tangent to $\g_1$ at $A_1$,  then there exists an interval $U''$,  $s_0 \in U'' \subset \Delta$, such that the restriction of $t_1$ on $U''$ and the restriction of $s$ on $t_1(U'')$ are inverse to each other.
\end{cor}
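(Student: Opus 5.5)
The plan is to combine Propositions \ref{nnorm2} and \ref{nnorm3} with the uniqueness parts of Proposition \ref{nnorm1}.

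First I construct $t_1$. Since $n_A$ is not tangent to $\g_1$ at $A_1$, item (i) of Proposition \ref{nnorm1} gives an interval $U\ni s_0$ and a $C^1$ function $t_1:U\to\Delta_1$ with $t_1(s_0)=t_0$ and $G(t_1(s),s)=0$ on $U$. Because $\g$ is $C^2$ and condition (c) of Proposition \ref{nnorm2} holds, that proposition gives $t_1'(s_0)\neq0$. Hence there is a subinterval $U'\ni s_0$ on which $t_1$ is a $C^1$ diffeomorphism onto the open interval $t_1(U')\ni t_0$.

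Next I construct $s_1$. Since either $\kappa_\g(A)=0$ or $A_1\neq C_A$, item (ii) of Proposition \ref{nnorm1} gives an interval $V\ni t_0$ and the unique $C^1$ function $s_1:V\to\Delta$ with $s_1(t_0)=s_0$ and $G(t,s_1(t))=0$ on $V$.

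The key step is to show that $s_1$ coincides with $t_1^{-1}$ near $t_0$, using the uniqueness in the implicit function theorem. The inverse $\sigma:=t_1^{-1}:t_1(U')\to U'$ is $C^1$, satisfies $\sigma(t_0)=s_0$, and satisfies $G(t,\sigma(t))=0$ for all $t\in t_1(U')$, since $G(t_1(s),s)=0$. Since $G'_s(t_0,s_0)\neq0$, the implicit function theorem gives a neighbourhood $W\times U_0$ of $(t_0,s_0)$ in which the zero set of $G$ is exactly the graph of $s_1$. By continuity of $\sigma$ at $t_0$, there is an interval $W'\ni t_0$ with $W'\subset W\cap V\cap t_1(U')$ and $\sigma(W')\subset U_0$. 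On $W'$ the points $(t,\sigma(t))$ lie in the zero set inside $W\times U_0$, so $\sigma=s_1$ on $W'$.

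Finally, I set $U'':=\sigma(W')=t_1^{-1}(W')$. This is an open interval containing $s_0$, because $t_1$ is a homeomorphism of $U'$ onto its image. Then $t_1(U'')=W'$, and the restrictions $t_1|_{U''}$ and $s_1|_{W'}$ are mutually inverse.

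The main obstacle is the uniqueness argument. "Unique" in Proposition \ref{nnorm1} is a local statement, valid inside the implicit-function neighbourhood, not on all of $V$. So I must shrink to $W'$ explicitly, as above, rather than claim $s_1=t_1^{-1}$ on the whole of $V$. The rest is bookkeeping of intervals.
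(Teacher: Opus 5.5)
Your proof is correct and follows the paper's argument: you identify $t_1^{-1}$ with $s_1$ near $t_0$ by uniqueness in the implicit function theorem. You are more careful than the paper in two respects: the paper cites Proposition \ref{nnorm3} (which you show is not needed) and asserts coincidence on all of $V'\cap t_1(U')$, whereas you shrink explicitly to a small interval $W'$ where local uniqueness actually applies and then produce $U''=t_1^{-1}(W')$.
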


\begin{proof}
Consider the inverse mapping $t_1^{-1}:t(U') \rightarrow U'$ from Proposition \ref{nnorm2}. Its restriction on $V' \cap t(U')$, where $V'$ is from Proposition \ref{nnorm3}, is a second implicit function (the first being the restriction of $s_1$ on this set). Uniqueness of the implicit function implies that $s_1$ and $t_1^{-1}$ coincide on the set $V' \cap t(U')$.

\end{proof}

We will also need the following simple observation.

\begin{lemma}\label{lem} Let $\g_1(t)$, $t \in \Delta_1$, and $\g_2(\tau)$, $ \tau \in \Delta_2$, be $C^1$-smooth curves. Let $t_0 \in W$ where
$W$ is an open subset of $\Delta_1$ and let
$u:W \to \Delta_2$ be a $C^1$-smooth function such that $\g_1(t_0) \neq \g_2(u(t_0))$ and $u\,'(t_0)=0$.
Set $d(t)= |\g_1(t)\g_2(u(t))|$ for any $t \in W$.
Then $d'(t_0)=0$ if and only if the line
$\g_1(t_0)\g_2(u(t_0))$ is the normal to $\g_1$ at $\g_1(t_0)$.
\end{lemma}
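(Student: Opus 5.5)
The plan is to differentiate the squared distance rather than the distance itself. Put $D(t)=|\g_1(t)-\g_2(u(t))|^2$. Since $\g_1(t_0)\neq \g_2(u(t_0))$, continuity gives $d(t)>0$ near $t_0$, so $d=\sqrt{D}$ is $C^1$ there. Moreover $d'(t_0)=D'(t_0)/(2d(t_0))$. Hence $d'(t_0)=0$ if and only if $D'(t_0)=0$.

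Next I compute $D'(t_0)$ with the chain rule. Writing the vector $w(t)=\g_1(t)-\g_2(u(t))$, we get
$$D'(t)=2\langle w(t), \g_1'(t)-\g_2'(u(t))\,u'(t)\rangle .$$
At $t=t_0$ the hypothesis $u'(t_0)=0$ kills the second term, leaving
$$D'(t_0)=2\langle \g_1(t_0)-\g_2(u(t_0)),\ \g_1'(t_0)\rangle .$$
This is exactly the point where the assumption $u'(t_0)=0$ enters; it makes the motion of the second endpoint irrelevant to first order.

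Finally, I translate the condition into geometry. The vector $\g_1(t_0)-\g_2(u(t_0))$ is nonzero and spans the line $\g_1(t_0)\g_2(u(t_0))$. The vector $\g_1'(t_0)$ is nonzero by regularity and spans the tangent $t_{\g_1(t_0)}$. So $D'(t_0)=0$ holds precisely when the line through $\g_1(t_0)$ and $\g_2(u(t_0))$ is perpendicular to the tangent at $\g_1(t_0)$. A line through $\g_1(t_0)$ perpendicular to $\g_1'(t_0)$ is by definition the normal $n_{\g_1(t_0)}$, i.e.\ it is parallel to $N(t_0)$; equivalently, $\g_2(u(t_0))$ satisfies equation (\ref{normala}) written for $\g_1$. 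This yields both directions of the equivalence.

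I expect no real obstacle. The only points needing care are the smoothness of $d$, which requires $d(t_0)\neq 0$ and is supplied by the hypothesis $\g_1(t_0)\neq\g_2(u(t_0))$, and the nondegeneracy of the two vectors, which makes ``the line'' and ``the normal'' well defined.
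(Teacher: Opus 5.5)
Your proof is correct and follows essentially the same route as the paper: differentiate $d(t)^2$, use $u'(t_0)=0$ to reduce to $d(t_0)d'(t_0)=\langle \g_1(t_0)-\g_2(u(t_0)),\g_1'(t_0)\rangle$, and conclude from $d(t_0)\neq 0$ and $\g_1'(t_0)\neq 0$. You also note explicitly that $d$ is $C^1$ near $t_0$ because $d(t_0)>0$, which the paper leaves implicit.
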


\begin{proof} Since $u'(t_0) =0$, we get for the derivative  at $t_0$ of the function
$$d(t)^2 = \langle \g_1(t) - \g_2(u(t)), \g_1(t) - \g_2(u(t))\rangle$$
the expression
$$ d(t_0) d'(t_0) = \langle \g_1(t_0) - \g_2(u(t_0)), \g_1'(t_0) \rangle.$$
As $d(t_0) \neq 0$ we see that
 $d'(t_0)=0$ if and only if the non-zero vector
 $\g_1(t_0) - \g_2(u(t_0))$ is perpendicular to the non-zero tangent vector $\g\,'_1(t_0)$.
\end{proof}

\section{Main results}\label{mr}

Let $\g_1$ and $\g_2$ be $C^1$-smooth curves in the plane.
In this section, we obtain geometric information about the points of local extremum of the distance function
$d(A_1,A_2)=|A_1A_2| , A_1\in \g_1 , A_2\in \g_2$, $A_1 \neq A_2$, under the following constraints:

(i) The lines $A_1A_2$ are normal to a given $C^2$-smooth curve $\g$.

(ii) The lines $A_1A_2$ pass through a given point $O$.

(iii) The lines $A_1A_2$ are tangent to a given $C^1$-smooth curve $\g$.

(iv) The lines $A_1A_2$ intersect a given $C^1$-smooth curve $\g$.

To distinguish these four cases, we denote the corresponding
distance functions by $d_{n_{\g}}, d_O \ , d_{t_{\g}}, $ and
$d_{\g}$.

We first consider the function $d_{n_{\g}}$.

\begin{thm}\label{A} Let $\g_1(t)$,
$t\in \Delta_1$, and $ \g_2(\tau)$,
$\tau \in \Delta_2$,  be $C^1$-smooth curves
and $\g(s)$, $s \in \Delta$, be a $C^2$-smooth curve with sign curvature
$\kappa_{\g}$. Let $A_1\in \g_1$ and $A_2\in \g_2$ be such that
$A_1\neq A_2$, $A_1A_2=n_A$ for some $A \in \g$, and the function $d_{n_{\g}}$ has a local extremum at the triple $(A, A_1,A_2)$.

(i) If $\kappa_{\g}(A)\neq 0$, then the normals $n_{A_1} ,
n_{A_2}$ and the line $n_{C_A}$ through the center of curvature
$C_A$ of $\g$ and perpendicular to $A_1A_2$ either intersect at
one point, or are parallel.

(ii) If $\kappa_{\g}(A)= 0$, then the normals $n_{A_1}$ and
$n_{A_2}$ are parallel.
\end{thm}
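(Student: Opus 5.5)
The plan is a first-order condition along a family of normals to $\g$, written in a moving orthonormal frame. Let $A=\g(s_0)$, $T(s)=\g\,'(s)$, $N(s)$ be the unit normal, so $T'=\kappa_{\g} N$ and $N'=-\kappa_{\g} T$ by (\ref{5}). The proof splits according to whether $n_A$ is tangent to $\g_1$ at $A_1$ and/or to $\g_2$ at $A_2$.

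\emph{Main case: $n_A$ is tangent to neither $\g_1$ nor $\g_2$.} Proposition \ref{nnorm1}(i) applies to both curves; it needs only this non-tangency, and not $A_i\neq C_A$. It gives $C^1$ functions $t_1(s)$, $\tau_2(s)$ with $A_1(s):=\g_1(t_1(s))\in n_{\g(s)}$ and $A_2(s):=\g_2(\tau_2(s))\in n_{\g(s)}$. Write $A_i(s)=\g(s)+\lambda_i(s)N(s)$ with $\lambda_i(s)=\langle A_i(s)-\g(s),N(s)\rangle$, which is $C^1$. Every $(\g(s),A_1(s),A_2(s))$ is admissible near $s_0$, and $d_{n_{\g}}=|\lambda_1-\lambda_2|$ with $\lambda_1\neq\lambda_2$. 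A local extremum therefore forces $\lambda_1'(s_0)=\lambda_2'(s_0)$. Differentiating,
$$A_i'(s)=(1-\kappa_{\g}\lambda_i)\,T+\lambda_i'\,N .$$
Use coordinates with origin $A$, $x$-axis along $T(s_0)$ and $y$-axis along $N(s_0)$. Then $n_{C_A}$ is the line $y=1/\kappa$ (writing $\kappa=\kappa_{\g}(s_0)$).

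Where $A_i'(s_0)\neq 0$, the normal $n_{A_i}$ is $x(1-\kappa\lambda_i)+(y-\lambda_i)\lambda_i'=0$. If $A_i\neq C_A$, i.e. $1-\kappa\lambda_i\neq0$, it meets $y=1/\kappa$ exactly at $x=-\lambda_i'/\kappa$. Hence $\lambda_1'=\lambda_2'$ gives a common point of $n_{A_1},n_{A_2},n_{C_A}$.

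When $A_1=C_A$, the tangent to $\g_1$ is not along $N$. So $n_{A_1}$ passes through $C_A$ and is different from $n_A$. (If $A_1'(s_0)=0$, which happens by Proposition \ref{nnorm2}, one reads $n_{A_1}$ off $\g_1'(t_0)$ directly.) There are two subcases:
\begin{itemize}
\item If $\lambda_1'=\lambda_2'\neq0$, then $n_{A_1}$ is $n_{C_A}$ itself, and concurrency is trivial or parallel.
\item If $\lambda_1'=\lambda_2'=0$, then $n_{A_2}$ is $x=0$, i.e. $n_A$. It passes through $C_A$, so all three lines meet at $C_A$.
\end{itemize}
For (ii), $\kappa=0$ gives $A_i'=T+\lambda_i'N$. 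The normals $n_{A_i}$ are then parallel precisely when $\lambda_1'=\lambda_2'$.

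\emph{Tangential cases.} Suppose $n_A$ is tangent to $\g_1$ at $A_1$ but not to $\g_2$ at $A_2$. Then $A_1\neq C_A$, since otherwise $G=G'_t=G'_s=0$; I need to double-check this via Lemma \ref{l1} or else treat it separately. So Proposition \ref{nnorm1}(ii) gives $s_1(t)$, and Proposition \ref{nnorm3} gives $s_1'(t_0)=0$. Put $u(t)=\tau_2(s_1(t))$, so $u'(t_0)=0$, and $d(t)=|\g_1(t)\g_2(u(t))|$ is the distance along admissible triples. At an extremum $d'(t_0)=0$, and Lemma \ref{lem} then says $A_1A_2$ is normal to $\g_1$ at $A_1$. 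This contradicts tangency, so this case cannot occur at an extremum. The symmetric case is the same. If $n_A$ is tangent to both curves, then $n_{A_1}$, $n_{A_2}$ and $n_{C_A}$ are all perpendicular to $A_1A_2$, hence parallel, which is the claim in both (i) and (ii).

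The main obstacle I expect is the bookkeeping of these degenerate configurations: $A_i=C_A$, where $t_i'(s_0)=0$ and the normal must be read from the original parametrization; and the tangential cases, where one must check that the implicit-function reparametrization really describes all admissible triples near $(A,A_1,A_2)$, so that extremality passes to $d(t)$. The main-case computation itself is short.
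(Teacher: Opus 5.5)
\emph{Overall.} Your main case is the paper's argument. The abscissa $-\lambda_i'/\kappa$ at which $n_{A_i}$ meets $n_{C_A}$ is formula (\ref{D1}). The ``tangent to both'' case and the ``tangent to $\g_1$ only, $A_1\neq C_A$'' case are argued exactly as in the paper, via Propositions \ref{nnorm1}(ii), \ref{nnorm3} and Lemma \ref{lem}.

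For $A_1=C_A$ in the non-tangent case you argue more directly than the paper, which uses Proposition \ref{nnorm1}(ii), Corollary \ref{homo} and Lemma \ref{lem}, and your route works. Two small corrections there:
\begin{itemize}
\item We have $\lambda_1'(s_0)N=A_1'(s_0)=t_1'(s_0)\g_1'(t_0)$, and $\g_1'(t_0)$ is not parallel to $N$. This forces $\lambda_1'(s_0)=0$, so your first bullet is vacuous.
\item ``Different from $n_A$'' should read ``not parallel to $n_{C_A}$''.
\end{itemize}
Your worry about whether the reparametrized family exhausts all admissible triples is unnecessary. An extremum over all admissible triples near $(A,A_1,A_2)$ is in particular an extremum along any continuous family of admissible triples through it.

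\emph{The gap.} It is in the tangential case of (i). The claim $A_1\neq C_A$ is unjustified. Having $G=G'_t=G'_s=0$ at $(t_0,s_0)$ is not a contradiction: by Lemma \ref{l1}(ii),(iii) it says exactly that $A_1=C_A$ and $n_A$ is tangent to $\g_1$ at $A_1$.

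This configuration occurs, and can even be extremal. Take $\g$ the unit circle centered at $O$, $A=(1,0)$, $\g_1(x)=(x,x|x|^{1/2})$, and $\g_2$ the polar curve $r=2+2\theta^2$. The admissible triples near $(A,O,(2,0))$ have $d_{n_{\g}}\in\{2+2s^2,\ 2+2s^2\pm\tan^2 s/\cos s\}$, the last two occurring only for $s>0$. So this triple is a strict local minimum.

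At such a point both $G'_t$ and $G'_s$ vanish. Hence neither implicit function of Proposition \ref{nnorm1} is available, and your Lemma \ref{lem} argument cannot be run. Its conclusion, that this case is never extremal, would in fact be false.

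The case must be treated separately, as the paper does, and it is easy:
\begin{itemize}
\item $n_{A_1}$ passes through $A_1=C_A$ and is perpendicular to $\g_1'(t_0)$, which is parallel to $A_1A_2$. Hence $n_{A_1}=n_{C_A}$.
\item Since $A_1A_2$ is not tangent to $\g_2$ at $A_2$, $n_{A_2}$ is not parallel to $n_{C_A}$. So it meets $n_{C_A}$ in a single point, which lies on all three lines.
\end{itemize}
In (ii) the issue does not arise: $C_A$ does not exist, so Proposition \ref{nnorm1}(ii) applies.
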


\begin{proof}To prove (i), we consider the following three options for the line $A_1A_2$ that cover all possible cases:

\emph{1. $A_1A_2$ is tangent to $\g_1$ and  $\g_2$
at $A_1$ and  $A_2$ correspondingly.}

\emph{2. $A_1A_2$ is not tangent to $\g_1$ and
$\g_2$ at $A_1$ and $A_2$ correspondingly. }

\emph{3. $A_1A_2$ is tangent to $\g_1$ at $A_1$ and not tangent to $\g_2$ at $A_2$. }

In the case \emph{1} the normals $n_{A_1}$ and $n_{A_2}$ are parallel to
the line $n_{C_A}$ and there is nothing to prove.

In case \emph{2}, we will prove that the normals
$n_{A_1} , n_{A_2}$
and the line $n_{C_A}$ intersect at one point.
We split case \emph{2} into two separate sub-cases:

(a) $C_A$ is different from the points $A_1$ and $A_2 $ (this is the so-called ``generic case'');

(b) $C_A$ coincides with one of the two different points $A_1$ and $A_2$.

Consider first the sub-case (a).
Let $s$ be the arc-length parameter of $\g$ with $A=\g(0)$
and set $\g(s)=(p(s), q(s)) \ , \ T(s)=\g\,'(s)=(p\,'(s),q\,'(s))$ and
$N(s)= (-q\,'(s), p\,'(s)$. As already mentioned above,  $T(s)$ is the unit tangent vector
and $N(s)$ is the unit normal vector to $\g$ at $A(s)=\g(s)$. Since the line $A_1A_2$ is not tangent to $\g_1$ at $A_1$, it
follows by item (i) of Proposition \ref{nnorm1}, that there is  a
$C^1$-smooth function $t_1(s)$, defined in a neighborhood $U$ of $0$, such that $A_1= \g_1(t_1(0))$ and
$\g_1(t_1(s))= \g(s)+ d_1(s)N(s)$ for every $s \in U$.
Here
$d_1(s):= \langle \g_1(t_1(s)) - \g(s), N(s)\rangle$ is
the {\it oriented distance} from $A(s)=\g(s)$ to $A_1(s)=\g_1(t_1(s))$ relative to the orientation
of the line $A_1A_2$ given by the vector $N(s)$. Set
$(T,N)=(T(0),N(0))$ and let $B_1$ be the intersection point of the
normal $n_{A_1}$ and the line $n_{C_A}$. This intersection point $B_1$ exists because $n_A$ is not tangent to $\g_1$ at $A_1$. We show next that
\begin{equation}\label{D1}
\overrightarrow{C_AB_1} =-\frac{d\,'_1(0)}{\kappa_{\g}(A)}T .
\end{equation}

To prove this, we differentiate the identity
$\g_1(t_1(s))= \g(s)+ d_1(s)N(s)$ and use the Frenet-Serret formula $N'(s)= -k(s)T(s)$ for plane
curves. This yields,  for the derivative at $s=0$, the expression

$$\frac{d}{ds}\g_1(t_1(s))|_{s=0}=
T+d\,'_1(0)N+d_1(0)N'(0)=T(1-d_1(0)\kappa_{\g}(A))+d\,'_1(0)N.$$

Set $\overrightarrow{C_AB_1} =\lambda T$. Then
$$ \overrightarrow{A_1B_1}= \overrightarrow{A_1C_A} +
\overrightarrow{C_AB_1}=
(\frac{1}{\kappa_{\g}(A)}-d_1(0))N+\lambda T$$ and, having in mind
that $C_A\neq A_1$ (i.e.
$\frac{1}{\kappa_{\g}(A)} \neq d_1(0)$), we conclude that $A_1B_1$ is perpendicular to
$\frac{d}{ds}\g_1(t_1(s))|_{s=0}$
if and only if $\lambda = -\frac{d'_1(0)}{\kappa(A)}$.

Let now $B_2$ be the intersection point of the normal $n_{A_2}$
and the line $n_{C_A}$. Then the same reasoning as above shows
that
\begin{equation}\label{D2}
\overrightarrow{C_AB_2} =-\frac{d\,'_2(0)}{\kappa_{\g}(A)}T ,
\end{equation}
where $d_2(s)$ is the oriented distance from $A(s)=\g(s)$ to
$A_2(s)=\g_2(\tau_2(s))$. The function
$\tau_2(s)$ here is obtained by item (i) of Proposition \ref{nnorm1} with the curve $\g_1$ replaced by the curve $\g_2$. It follows by (\ref{D1}) and (\ref{D2})
that
\begin{equation}\label{D3}
\overrightarrow{B_1B_2} =\frac{d\,'_1(0)-d\,'_2(0)}{\kappa_{\g}(A)}T.
\end{equation}
Let $d(s)$ be the oriented distance from $A_1(s)=\g_1(s)$ to
$A_2(s)=\g_2(s)$. Then $d(s)=d_2(s)-d_1(s)$ and
$d\,'_2(0)-d\,'_1(0)=d\,'(0)=0$ since $d(s)$ has a local extremum at
$s=0$. So, $B_1=B_2$ as desired.

In the sub-case (b) of case \emph{2} we can assume, without loss of generality, that $C_A= A_2$. This point belongs to
$n_{A_2}$ and to $n_{C_A}$. It remains to show that the line $A_1A_2$ is the normal to $\g_1$ at the point $A_1$. This will be derived from Lemma \ref{lem}. Indeed, as $A_1 \neq C_A$, it follows by item (ii) of Proposition \ref{nnorm1} and Corollary \ref{homo} that there exist an open interval
$V \subset \Delta_1$ containing $t_0$, and a  difeomorphism $s_1(t)$ defined in $V$ with values in
$\Delta$ such that $s_1(t_0) = 0$,
$A_1 =\g_1(t_0)$,
$A= \g(0)$, and for every
$t \in V$ the point $\g_1(t)$ belongs to the normal
 $n_{\g(s_1(t))}$. Note that the image $s_1(V)$ of the set $V$ is an open neighborhood of $0 $ in $\Delta$.

 Since the line $A_1A_2$ is not tangent to $\g_2$ at $A_2$, item (i) of Proposition \ref{nnorm1} implies that there exist an open interval $U \subset \Delta$ containing $0$, and a $C^1$-smooth function $\tau_2(s)$ defined in $ U$ with values in
$\Delta_2$ such that $\tau_2(0) = \tau_0$, $\g_2(\tau_0)=A_2$, and for every
$s \in U$ the point $\g_2(\tau_2(s))$ belongs to the normal $n_{\g(s)}$. Note that, by Proposition \ref{nnorm2} the derivative $\tau_2'(0) = 0$. We can assume, without restricting the generality, that
$s_1(V) \subset U$. This allows to consider the composed function $u(t) = \tau_2(s_1(t))$. Clearly, $u'(t_0) = 0$. Since $A_1A_2$ is a local extremum,
$d'(t)=0$ where $d(t)=|\g_1(t)\g_2(u(t))|$. Lemma \ref{lem} implies that the line $\g_1(t)\g_2(u(t)$ is the normal to $\g_1$ at the point $A_1$.

Let us now consider the case \emph{3} of item (i) from Theorem \ref{A}: the line $A_1A_2$ is tangent to $\g_1$ at $A_1$ and not tangent to
$\g_2$ at $A_2$.

If $A_1=C_A$, then
$n_{A_1}=n_{C_A}$ and  $n_{A_1}, n_{A_2}$ and $n_{C_A}$ intersect at one point because $A_1A_2$ is not tangent to $\g_2$ at $A_2$.

Let now $A_1\neq C_A$. Consider the function
$s_1(t)$ defined in item (ii) of Proposition \ref{nnorm1}. Note that (by Proposition
\ref{nnorm3}) $s_1'(t_0)=0$. Consider also the function $\tau_2(s)$ the existence of which is assured by item (i) of Proposition \ref{nnorm1}
(because $A_1A_2$ is not tangent to $\g_2$ at $A_2$.). Put $u(t):=\tau_2(s_1(t))$. Clearly, $u'(t_0)=0$. Lemma \ref{lem} implies that $A_1A_2$ is the normal to $\g_1$ at $A_1$. This is a contaradiction because $A_1A_2$ cannot be both a tangent and a normal to $\g_1$ at $A_1$ (because $\g_1'(t_0) \neq 0 $).

\medskip
It remains to prove the claim in item (ii) of  Theorem \ref{A} (when
$\kappa_{\g}(A)=0$).  There are, as above, three options for the line $A_1A_2$ which cover all possible cases:

\emph{1'. $A_1A_2$ is tangent to $\g_1$ at $A_1$ and  tangent to $\g_2$ at $A_2$.}

\emph{2'. $A_1A_2$ is not tangent to $\g_1$ at $A_1$ and not tangent to $\g_2$
at $A_2$. }

\emph{3'.}$ A_1A_2$ is tangent to $\g_1$ at $A_1$ and not tangent to $\g_2$
at $A_2$.

We proceed as in the proof of item (i).  The claim in the case \emph{1' } needs no proof. In \emph{3'} we reach a contradiction as in the case  \emph{3} of item (i). Therefore, it suffices to consider only the case \emph{2'} when the line
$A_1A_2$ is not tangent to $\g_1$ at $A_1$
and not tangent to $\g_2$ at $A_2$. We have seen that there exists an open set
$U$, $0 \in U \subset \Delta$, and two functions
$t_1(s)$ and $\tau_2(s)$ in $U$ with values in
$\Delta_1$ and $\Delta_2$ correspondingly,  such that $A_1=\g_1(t_1(0))$.
$A_2 = \g_2(\tau_2(0))$
and, for every $s \in U$, the points
$A_1(s):=\g_1(t_1(s)) $ and $A_2(s):=\g_2(\tau_2(s))$ belong to $n_{A(s)}$
where $A(s) = \g(s)$. Clearly,
$\g_1(t_1(s)) = \g(s) +d_1(s) N(s)$, where $d_1(s)$ is the oriented distance between
$A(s) = \g(s)$ and $A_1(s)=\g_1(t_1(s))$.  As
$N'(0)=-\kappa_{\g}(A)T=0$ we get for the derivative of $\g_1(t_1(s)) $ at $s=0$ the expression
$$\frac{d}{ds}\g_1(t_1(s))|_{s=0}=
T+d\,'_1(0)N.$$

Similarly, we have

$$\frac{d}{ds}\g_2(\tau_2(s))|_{s=0}=
T+d\,'_2(0)N$$
where $d_2(s)$ is the oriented distance between $A(s)$ and $A_2(s)$.
Since the triple $(A, A_1,A_2)$ is a local extremum
we have, as in item (i), that $d_1'(0)=d_2'(0)$. This implies that the tangent vector to
$\g_1(t)$ at $A_1$ and the tangent vector to $\g_2(\tau)$ at $A_2$ are colinear. It follows that the normals to the curves at $A_1$ and $A_2$ are parallel.
\end{proof}

The following example shows that the most we can say in case (ii)
of Theorem \ref{A} when $\kappa_{\g}(A)=0$ is that the normals
$n_{A_1}$ and $n_{A_2}$ are parallel.
\smallskip

\begin{exam}\label{ex1} Let $\g_1$ and $\g_2$ be parallel lines and $\g$ be a line
which is neither parallel nor perpendicular to $\g_1$ and $\g_2$.
Take an arbitrary point $A\in \g$ and let $A_1$, and $A_2$ be the
intersection points of the normal $n_A$ to $\g$ at $A$ with $\g_1$
and $\g_2$, respectively. Then the admissible triple $(A, A_1, A_2)$ is a point of global extremum for
$d_{n_{\g}}$ since this function is constant. On the
other hand, $\kappa_{\g}(A)=0$ and $C_A$ does not exist, but the normals
$n_{A_1}, n_{A_2}$ and any line perpendicular to $A_1A_2$ are
neither parallel nor intersect at one point.
\end{exam}
\smallskip

If $\g_2=\g$ the admissible triples are of the type
$(A, A_1, A)$ and we work with pairs
$(A_1,A)$, such that $A_1\in \g_1 \ , \ A\in \g $ and $A_1A$ is the normal $n_A$ to $\g$ at $A$. In this case, the distance $d_{n_{\g}}(A_1,A) =|A_1A|$ and we have the following corollary of Theorem \ref{A}.

\begin{cor}\label{cor1} Let $\g_1$ be a $C^1$-smooth curve and
$\g$ be a $C^2$-smooth curve. Consider all pairs
$(A_1,A)$ such that $A_1\in \g_1$, $A\in \g$,
$A \neq A_1\neq C_A$, and $A_1A=n_A$. If the function
$d_{n_{\g}}$ has a local extremum at such a pair
$(A_1, A)$, then
$n_{A_1}=n_A$. I.e. $A_1A$ is a common normal to $\g_1$ and $\g$
at $A_1$ and $A$, respectively.
\end{cor}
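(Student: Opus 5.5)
We plan to apply Theorem \ref{A} with $\g_2=\g$, so that the admissible triples are $(A,A_1,A)$. In this setting $A_2=A$, and the normal $n_{A_2}$ to $\g_2=\g$ at $A_2$ is exactly $n_A$, the line $A_1A$. This line is not tangent to $\g_2$ at $A_2$, since a normal is never a tangent. We treat the two cases $\kappa_{\g}(A)\neq 0$ and $\kappa_{\g}(A)=0$ separately. In each case the aim is to show that $n_{A_1}$ coincides with $n_A$.

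\emph{Case $\kappa_{\g}(A)=0$.} Item (ii) of Theorem \ref{A} gives that $n_{A_1}$ is parallel to $n_{A_2}=n_A$. Both lines pass through $A_1$, because $A_1\in n_A$. Hence $n_{A_1}=n_A$.

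\emph{Case $\kappa_{\g}(A)\neq0$.} Item (i) of Theorem \ref{A} says that the three lines $n_{A_1}$, $n_A$ and $n_{C_A}$ are either concurrent or parallel.
\begin{itemize}
\item Suppose they are parallel. Then $n_{A_1}\parallel n_A$, and both pass through $A_1$, so $n_{A_1}=n_A$.
\item Suppose they are concurrent. The line $n_{C_A}$ is perpendicular to $n_A$ and meets it only at $C_A$, so the common point is $C_A$. Thus $C_A\in n_{A_1}$. Since $A_1\neq C_A$ and both points lie on $n_A$, the line $n_{A_1}$ contains two distinct points of $n_A$, namely $A_1$ and $C_A$. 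Hence $n_{A_1}=n_A$.
\end{itemize}
In both sub-cases we obtain $n_{A_1}=n_A$, which is the claim.

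The main obstacle is checking that Theorem \ref{A} genuinely applies when $\g_2=\g$, and that its case analysis does not break down in this degenerate configuration. Here the point $A_2(s)=\g(s)$ has oriented distance $d_2\equiv0$. This fits case 2 of the proof, or case 3 if $n_A$ is tangent to $\g_1$, and all the hypotheses used there are met: $A_2\neq A_1$, and $A_2=A\neq C_A$ because $|AC_A|=1/|\kappa_{\g}(A)|>0$. The extra hypothesis $A_1\neq C_A$ is needed to exclude sub-case (b) with $C_A=A_1$, where concurrency at $C_A=A_1$ gives no information. Alternatively, one could argue directly, bypassing Theorem \ref{A}. Since $A_1\neq C_A$, Proposition \ref{nnorm1}(ii) reparametrizes $\g$ by $t$ via $s_1(t)$, giving $d(t)=|\g_1(t)\g(s_1(t))|$. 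Differentiating $d(t)^2$ and using that $\g(s_1(t))-\g_1(t)$ is orthogonal to $\g'(s_1(t))$ shows that $d'(t_0)=0$ forces $\g_1'(t_0)\perp A_1A$. That is exactly $n_{A_1}=n_A$, and this serves as a consistency check on the argument above.
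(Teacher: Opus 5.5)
Your proof is correct and is essentially the paper's: you apply Theorem \ref{A} with $\g_2=\g$, $A_2=A$, so that $n_{A_2}=n_A$. For $\kappa_{\g}(A)\neq 0$ the lines $n_A$ and $n_{C_A}$ are perpendicular, so the three lines must meet at $C_A$ (your ``parallel'' sub-case is vacuous), and $A_1\neq C_A$ then gives $n_{A_1}=n_A$; for $\kappa_{\g}(A)=0$, parallelism together with $A_1\in n_A$ suffices, as you say. Your direct alternative via $s_1(t)$ from Proposition \ref{nnorm1}(ii) is also sound, and in fact proves the corollary without Theorem \ref{A} and without a case split on $\kappa_{\g}(A)$.
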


\begin{proof} To apply Theorem \ref{A} put
$\g_2=\g$ and $A_2=A$. Then $n_{A_2}=n_A$. If $\kappa_{\g}(A)\neq 0$,
then $n_{A_2}\cap n_{C_A}=C_A$ and it follows from Theorem \ref{A} (i) that $n_{A_1}$ contains $C_A$. It follows that $n_{A_1}=n_A$.
If $\kappa_{\g}(A)=0$, then by Theorem \ref{A} (ii) we have that $n_{A_1}$ is parallel to $n_{A_2}=n_A$
and again $n_{A_1}=n_A$ since $A_1\in n_A$.
\end{proof}

The next example shows that the condition $A_1\neq C_A$ is
essential for the validity of Corollary \ref{cor1}.
\smallskip

\begin{exam}\label{ex2} Let $\g= \g_2$ be a unit circle with center at some point
$A_1$, $A$ be a point from $ \g=\g_2$ and $\g_1$ be a $C^1$-smooth curve through $A_1$ such that
$A_1A$ is neither tangent nor normal to $\g_1$
at $A_1$. Then the pairs $(A_1', A')$ of interest to us are of the form
$(A_1, A')$ and the distance
$d_{n_{\g}}$ is constant (equal to 1).
The three normals from Theorem \ref{A} meet at $A_1$ but the line $AA_1$ is not a common normal for $\g$ and $\g_1$ as in Corollary \ref{cor1}.

\end{exam}
\smallskip

Now we consider some consequences of the above results for the points of
local extremums of the functions $d_O$, $d_{\g}$ and $d_{t_{\g}}$.
\medskip
\begin{cor}\label{cor2}  Let $\g_1$ and $ \g_2$ be $C^1$-smooth
curves and $O$ be a fixed point in the plane. Consider all pairs $(A_1,A_2)$ such that
$A_1\in \g_1$, $A_2\in \g_2$, $A_1\neq A_2$ and $O\in A_1A_2$. If
the function $d_O$ has a local extremum at $(A_1,A_2)$, then the
normals $n_{A_1} , n_{A_2}$ and the line $n_O$ through $O$ that is
perpendicular to $A_1A_2$ either intersect at one point, or are
parallel.
\end{cor}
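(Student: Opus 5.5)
The natural route is to reduce Corollary \ref{cor2} to Theorem \ref{A}(i) by taking $\g$ to be a circle centred at $O$. Every normal to such a circle passes through $O$, and conversely every line through $O$ is a normal to it. The only exception is a line through $O$ that misses the circle, which cannot happen since every line through the centre meets the circle. The centre of curvature at every point is $O$ and the curvature never vanishes, so the line $n_{C_A}$ of Theorem \ref{A} becomes exactly $n_O$.

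The details are as follows. Let $(A_1,A_2)$ be a local extremum of $d_O$.

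\emph{Case $O\neq A_1$ and $O\neq A_2$.} Choose a radius $r>0$ and let $\g$ be the circle of radius $r$ centred at $O$, parametrized by arc length; it is $C^\infty$ with $\kappa_\g\equiv 1/r\neq 0$ and $C_A=O$ for every $A\in\g$. Let $A\in\g$ be either of the two points where the line $A_1A_2$ meets $\g$, so that $A_1A_2=n_A$.

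It remains to check that $(A,A_1,A_2)$ is a local extremum of $d_{n_\g}$. Take an admissible triple $(A',A_1',A_2')$ with $A'$ near $A$, $A_1'$ near $A_1$ and $A_2'$ near $A_2$. The line $A_1'A_2'=n_{A'}$ passes through $O$, so $(A_1',A_2')$ is admissible for $d_O$. Shrinking the neighbourhoods $U_1,U_2$ given by the local extremality of $d_O$ if necessary, the required inequality $|A_1A_2|\geq|A_1'A_2'|$ (or $\leq$) follows at once. Thus the set of competitors for $d_{n_\g}$ is a subset of the competitors for $d_O$, which is the direction we need. Theorem \ref{A}(i) then gives that $n_{A_1}$, $n_{A_2}$ and $n_{C_A}=n_O$ are concurrent or parallel.

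\emph{Case $O$ equals one endpoint, say $O=A_2$.} Then $n_O$ passes through $A_2$, and so does $n_{A_2}$. If the two lines coincide (i.e.\ $A_1A_2$ is tangent to $\g_2$), we need $n_{A_1}$ to meet this line or be parallel to it, which is automatic for two lines. If $n_{A_2}\neq n_O$, they meet only at $A_2$, and we must show that $n_{A_1}$ passes through $A_2$, i.e.\ that $A_1A_2$ is the normal to $\g_1$ at $A_1$.

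For this we use Lemma \ref{lem}. Fix $A_2'=A_2=O$ and let $A_1'$ range over $\g_1$ near $A_1$. Every such pair is admissible, since the line $A_1'O$ contains $O$. This corresponds to taking $u$ constant in Lemma \ref{lem}, so $u'\equiv0$. The function $t\mapsto|\g_1(t)O|$ then has a local extremum at $t_0$, its derivative vanishes there, and Lemma \ref{lem} yields that $A_1O$ is the normal to $\g_1$ at $A_1$.

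Alternatively, the circle argument also works here if the radius is chosen so that $O$ coincides with neither endpoint. Theorem \ref{A} already handles the situation $C_A=A_2$ in its sub-case (b), but the direct argument via Lemma \ref{lem} is cleaner.

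The main obstacle is the bookkeeping of neighbourhoods in the definition of local extremum, in particular making sure the admissible triples for the circle problem embed into the competitors for $d_O$. This is straightforward because the inclusion goes in the favourable direction: extremality over a larger set implies extremality over a subset.
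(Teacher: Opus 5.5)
Your proposal is correct and follows the paper's route: take $\g$ to be a circle centred at $O$, so that admissible pairs for $d_O$ give admissible triples for $d_{n_{\g}}$ with $C_A=O$, and invoke Theorem \ref{A}(i); you also spell out the transfer of local extremality, which the paper leaves implicit. Your separate treatment of $O\in\{A_1,A_2\}$ via Lemma \ref{lem} is valid but unnecessary, because Theorem \ref{A}(i) (sub-case (b) of case 2, and case 3) already covers $C_A$ coinciding with an endpoint, which is why the paper applies Theorem \ref{A} uniformly. Your remark about choosing the radius so that $O$ avoids the endpoints is a slip, since $O$ is the fixed centre, but Theorem \ref{A}(i) needs no such condition anyway.
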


\begin{proof} Let $\g$ be a circle with center $O$. Since any line
through $O$ is normal to $\g$ we have $d_O=d_{n_{\g}}$. Hence, this
corollary follows from Theorem \ref{A}.
\end{proof}

To prove the above results, we first parametrized the goal function (the corresponding distance) by a real-valued parameter and then used the standard optimality condition: {\it the derivative of the goal function is zero at the points of local extremum}. This was possible, loosely speaking, because the set of all admissible triples/pairs was sufficiently rich to ensure that such a parametrization is possible and the optimality condition is valid. However, the phenomenon {\it the three normals are either concurrent or parallel} sometimes remains valid also when the set of admissible triples/pairs is not rich at all. Here is an example.

\begin{exam}\label{ex3} Consider the curves $\g_1: y=(x-1)^2, x>0$, $\g_2: y=\frac{(x+2)^2}{2}, x<0 $, and the point  $O=(0,0)$ (Fig.1).
\end{exam}

\begin{figure}[h]
  \centering
  \includegraphics[width=6cm]{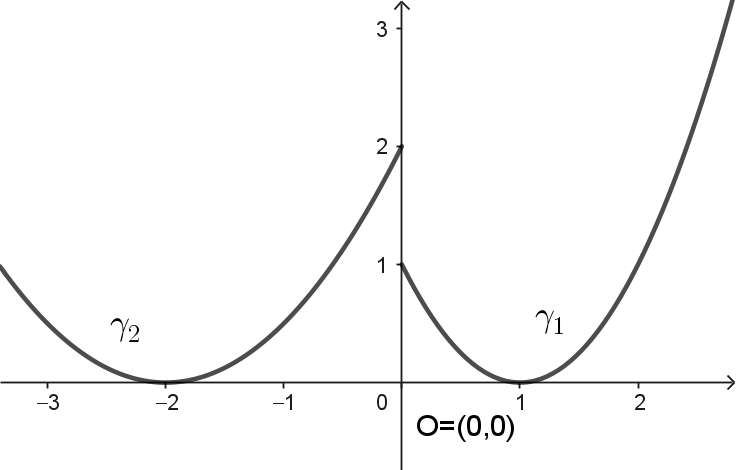}
  \caption*{Fig.1}
\end{figure}

Note that if
$A_1\in \g_1 \ , \ A_2\in \g_2$ and $O\in A_1A_2$, then
$A_1=(1,0)$ and $A_2=(-2,0)$. In this case, there is only one admissible pair
$(A_1, A_2)$, and $d_O(A_1, A_2)=3$. The three normals in this case are, nevertheless, parallel.
\medskip

The considerations till now may leave the wrong impression that the second half of the phenomenon ( {\it ``the three normals are parallel''}) appears only in exceptional and rather degenerate cases. Here is a ``regular'' example (with the function $d_O(A_1, A_2)$ to be minimized) in which the three normals are also parallel.

\begin{exam}\label{ex4}
 Let $\g_1$ be the curve
$$
\g_1(x) = \left\{ \begin{array}{ll}

(x,-(x-1)^2 )\ , \ 0<x\leq 1\\[5pt]
(x,(x-1)^2 )\ ,  \ x\geq 1.
\end{array}\right.
$$
Denote by $\g_2$ the curve symmetric to $\g_1$ with respect to the ordinate axis $Oy$ (Fig.2).
\end{exam}

\begin{figure}[h]
  \centering
  \includegraphics[width=6cm]{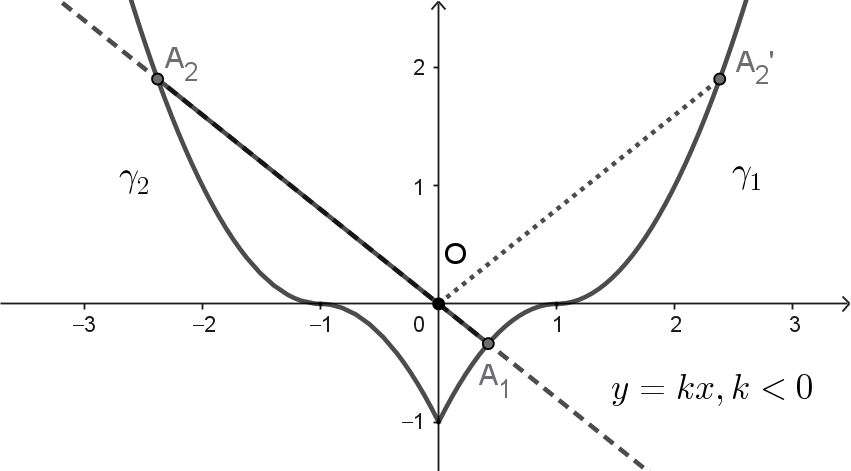}
 \caption*{Fig.2}
\end{figure}

To calculate the minimal value of the function
$d_O(A_1, A_2)$, where $O=(0,0)$,
let $A_1$ and $A_2$ be the intersection points of the line $y=kx$, $k \in \Bbb R$, with the curves
$\g_1$ and $\g_2$ correspondingly.
An easy calculation shows that for every $k \in \Bbb R$
$$d_O(A_1, A_2) =(2 +|k|)\sqrt{1 +k^2}.$$
Thus, the minimal distance between $A_1$ and $A_2$ is 2 and it is attained for $k=0$.  The pair $(A_1(1,0), A_2(-1, 0))$ is the only global minimum  for $d_O$. The three normals (to $\g_1$ at $A_1$, to $\g_2$ at $A_2$ and to the line $A_1A_2$ at $O$) are parallel to the ordinate axes $Oy$.

To reach this conclusion, we in fact  parametrized the curves
$\g_1$ and $\g_2$ by the parameter $k$ in such a ``coordinated'' way that, if $A_1(k) =\g_1(k)$, then the line $A_1(k)O$ intersects $\g_2$ at $A_2(k)=\g_2(k)$. While serving our immediate goals well, this parametrization
has a serious drawback. As seen from the formula

$$d_O(A_1(k), A_2(k))=(2 +|k|)\sqrt{1 +k^2},$$

\noindent
this parametrization is not differentiable at $k=0$. In all our considerations till now, however, differentiability played an important role. To get rid of this drawback, we provide  next a smooth parametrization of the curves
$\g_i, i=1,2$.  Let $\g_1$ be parametrized by the parameter $t=x$ with $t>0$. For the curve $\g_2$ we introduce a parametrization as follows:
$$
\g_2(t) = \left\{ \begin{array}{ll}

(-\ds \frac{1}{t},(1-\ds \frac{1}{t})^2) \ , \ 0<t\leq 1\\[5pt]
(-\ds \frac{1}{t},-(1-\ds \frac{1}{t})^2) \ , \ t\geq 1.
\end{array}\right.
$$
This parametrization is smooth and
also ``coordinated'': if $A_1(t)=\g_1(t) , t> 0$,  then the line $A_1(t)O$ intersects
$\g_2$ at the point $A_2(t)=\g_2(t)$. In this case
$$d_O^2(A_1(t), A_2(t))=(t+\frac{1}{t})^2((t+\frac{1}{t})^2-4(t+\frac{1}{t}) +5).$$
We already know that the global minimum of the function $d_O(A_1(t),A_2(t))$ is attained at $t=1$ since $g_1(1)=A_1(1)=(1,0)$ and $\g_2(1) = A_2(1)=(-1,0)$.

\medskip

\begin{cor}\label{cor3}  Let $\g_1$ be a $C^1$-smooth
curve and $O \notin \g_1$ be a fixed point. If
the distance from $O$ to a point of $\g_1$, has a local extremum
at $A_1\in g_1$, then $OA_1$ is the normal to $\g_1$ at $A_1$.
\end{cor}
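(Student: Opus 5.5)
This statement is a degenerate special case of Corollary \ref{cor2}, or of Theorem \ref{A} directly; one could also just differentiate. The most economical route is Lemma \ref{lem}. Let $\g_1(t)$, $t\in\Delta_1$, be a parametrization with $A_1=\g_1(t_0)$. Take $\g_2$ to be a constant ``curve'' at $O$. This is not a regular curve, so I will avoid calling it one and apply the computation of Lemma \ref{lem} directly. That computation uses only the fact that the second point does not move to first order.

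Concretely, set $d(t)=|\g_1(t)O|$, which is positive near $t_0$ because $O\notin\g_1$. Then $d(t)^2=\langle \g_1(t)-O,\g_1(t)-O\rangle$ is $C^1$. Since $d>0$, the function $d$ is $C^1$ as well. A local extremum of the distance at $A_1$ is a local extremum of $d$ at $t_0$. Hence $d'(t_0)=0$, and therefore
$$d(t_0)d'(t_0)=\langle \g_1(t_0)-O,\g_1'(t_0)\rangle=0.$$
Since $\g_1'(t_0)\neq 0$ by regularity and $\g_1(t_0)-O\neq 0$, the line $OA_1$ is perpendicular to the tangent $t_{A_1}$. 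It passes through $A_1$, so it is the normal $n_{A_1}$. This is exactly the argument in the proof of Lemma \ref{lem} with $u$ constant.

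Alternatively, to stay within the paper's framework, I would apply Corollary \ref{cor1} with $\g$ a circle centred at $A_1$. That route requires $A_1\ne C_A$, which fails here, so it does not fit. A better option is Corollary \ref{cor2} with $\g_2$ a small segment through $O$ transversal to $OA_1$. Near the extremal configuration, every line through $O$ meets $\g_2$ only at $O$, so $d_O$ coincides with our distance. That corollary then says that $n_{A_1}$, $n_O$ and the normal to the segment at $O$ are concurrent or parallel. The last two lines both pass through $O$ and are distinct, so they meet only at $O$. It follows that $n_{A_1}$ passes through $O$, and again $OA_1=n_{A_1}$.

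The main obstacle is only this bookkeeping: checking that the auxiliary segment setup really reproduces the local extremum. Because the direct computation avoids it entirely, I would present the direct derivative argument and mention the reduction as a remark.
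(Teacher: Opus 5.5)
Your proof is correct, but your main route differs from the paper's. The paper argues in one line by reduction. It takes an auxiliary $C^1$ curve $\g_2$ through $O$, not tangent to $OA_1$ at $O$, and applies the earlier concurrent-normals result. The text cites Corollary~\ref{cor1}, but with an auxiliary $\g_2$ through $O$ this is really Corollary~\ref{cor2}, i.e.\ exactly your alternative. You are also right that Corollary~\ref{cor1} with a circle centred at $A_1$ cannot be used, since $A_1=C_A$.

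The reduction makes a conceptual point: the classical fact is an instance of the three-normals phenomenon, with the two lines through $O$ pinning the common point at $O$. But it routes through Theorem~\ref{A} with $\g$ a circle centred at $O$. It lands in the degenerate case $C_A=A_2=O$ of that proof, which is settled by Lemma~\ref{lem}. There $\tau_2$ is constant, because every normal of the circle passes through $O$. So, once unwound, the reduction is precisely your computation with constant $u$. Your direct argument is therefore the self-contained core, needing only Fermat's rule and the regularity of $\g_1$.

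Two minor remarks. Your straight segment makes the reduction's bookkeeping trivial, whereas the paper's arbitrary transversal $C^1$ curve needs a small extra check: lines through $O$ close to $OA_1$ meet $\g_2$ near $O$ only at $O$. Also, you need not avoid Lemma~\ref{lem}. It applies verbatim to any regular $\g_2$ with $\g_2(\tau_0)=O$ together with the constant function $u\equiv\tau_0$, which is $C^1$ with $u'(t_0)=0$.
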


\begin{proof}
This well-known result is a direct consequence of
Corollary \ref{cor1} taking $\g_2$ to be any $C^1$-smooth curve
through $O$ which is not tangent to $OA_1$ at $O$.
\end{proof}

Now we will analyze in more detail the case $O=A_1$ in Corollary
\ref{cor2} .

\begin{prop}\label{alpha} Let $\g_1$ be a $C^{1,1}$-smooth
curve, $\g_2$ be a $C^1$-smooth curve and $O\in \g_1$. Then
$(O,A_2)$ is a local extremum for the function $d_O$ if and
only if $A_2$
is a point of local extremum for the function $d(A)=|OA|, A\in\g_2$, and the line $OA_2$ is not tangent to $\g_1$ at $O$.
\end{prop}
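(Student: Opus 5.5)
The plan is to argue directly from the definition of a local extremum of $d_O$, describing explicitly which admissible pairs $(A_1',A_2')$ lie near $(O,A_2)$. The key observation is that every pair $(O,A_2')$ with $A_2'\in\g_2$, $A_2'\neq O$, is admissible, because the line $OA_2'$ trivially contains $O$. All other nearby admissible pairs have $A_1'\neq O$, and then $A_2'$ must lie on the line through $O$ and $A_1'$. Note also that $A_2\neq O$, since admissible pairs satisfy $A_1\neq A_2$.

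\emph{Sufficiency.} Assume $A_2$ is a local extremum of $|OA|$ on $\g_2$ and $OA_2$ is not tangent to $\g_1$ at $O$. For $A_1'\in\g_1\setminus\{O\}$ tending to $O$, the direction of $\overrightarrow{OA_1'}$ tends to $\pm$ the tangent direction of $\g_1$ at $O$. That direction is not parallel to $OA_2$. Hence there are neighbourhoods $U_1\ni O$ and $U_2\ni A_2$ such that, for $A_1'\in U_1\setminus\{O\}$, the line $OA_1'$ misses $U_2$. Consequently the only admissible pairs in $U_1\times U_2$ are the pairs $(O,A_2')$. For these, $d_O(O,A_2')=|OA_2'|$, so the extremum of $d_O$ at $(O,A_2)$ is exactly the extremum of $|OA|$ at $A_2$.

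\emph{Necessity.} Restricting $d_O$ to the admissible pairs $(O,A_2')$ immediately shows that $A_2$ is a local extremum of $|OA|$ on $\g_2$. By Corollary \ref{cor3}, $OA_2$ is then the normal to $\g_2$ at $A_2$; in particular it is transversal to $\g_2$.

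It remains to exclude the case where $OA_2$ is tangent to $\g_1$ at $O$, and this is the main step. Assume tangency and introduce polar data around $O$. Since $OA_2$ is transversal to $\g_2$, the implicit function theorem gives the following: for each direction $\varphi$ near the direction $\varphi_0$ of $\overrightarrow{OA_2}$, the ray from $O$ in direction $\varphi$ meets $\g_2$ near $A_2$ at distance $r(\varphi)$, where $r$ is $C^1$. Because $OA_2$ is normal to $\g_2$, we get $r'(\varphi_0)=0$, hence $r(\varphi)-r(\varphi_0)=o(|\varphi-\varphi_0|)$.

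Now use the $C^{1,1}$ hypothesis. For $A_1'\in\g_1$ near $O$, the distance from $A_1'$ to the tangent line $OA_2$ is $O(|OA_1'|^2)$. Therefore the angle $\varphi$ between the line $OA_1'$ and $OA_2$ is $O(|OA_1'|)$.

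Take $A_1'$ on the two arcs of $\g_1$ on either side of $O$. On one arc $A_1'$ lies on the ray opposite to $A_2$, and the corresponding admissible pair has
\[
d_O=r(\varphi)+|OA_1'|=|OA_2|+|OA_1'|+o(|OA_1'|).
\]
On the other arc $A_1'$ lies on the same ray as $A_2$, and
\[
d_O=|OA_2|-|OA_1'|+o(|OA_1'|).
\]
Arbitrarily close to $(O,A_2)$ there are thus admissible pairs with strictly larger and with strictly smaller values of $d_O$, which contradicts the assumed local extremum.

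The delicate point is exactly this estimate. Mere $C^1$ smoothness would only give $\varphi=o(1)$, which does not control $r(\varphi)-r(\varphi_0)$ against the term $|OA_1'|$. The $C^{1,1}$ bound $\varphi=O(|OA_1'|)$ combined with $r'(\varphi_0)=0$ is what makes the error $o(|OA_1'|)$.
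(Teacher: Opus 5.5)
Your proof is correct and follows the paper's argument essentially step for step. Sufficiency is the same observation that near $(O,A_2)$ the only admissible pairs are $(O,A_2')$, and necessity is Corollary~\ref{cor3} followed by a first-order expansion that rules out tangency. Your polar data $r'(\varphi_0)=0$ and $\varphi-\varphi_0=O(|OA_1'|)$ are exactly the paper's $g(t)=o(|t|)$ and $t(x)=O(|x|)$ in different coordinates. They lead to the same expansion $d_O=|OA_2|\mp|OA_1'|+o(|OA_1'|)$, which is the paper's $d_O^2=1-2x+o(|x|)$.
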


\begin{proof} Let $(O,A_2)$ be a point of local extremum for the function
$d_O$. Then it is a point of local extremum  for the function $d$,
the distance from $O$ to a point of $\g_2$, and it follows from
Corollary \ref{cor3} that  $OA_2$ is the normal to $\g_2$ at
$A_2$.

To prove that the line $OA_2$ is not tangent to $\g_1$ at $O$
assume the contrary and consider an orthogonal coordinate system
$Oxy$ with positive $x$-axes $\overrightarrow{OA_2}$ and
$A_2=(1,0)$ . Then near $O$ the curve $\g_1$ is the graph of a
$C^{1,1}$-smooth function $f$ , i.e. $\g_1(x)=(x,f(x))$ with
$f(x)=O(\mid x\mid^2)$. Since $OA_2$ is the normal to $\g_2$ at
$A_2$ we can parameterize $\g_2$ near $A_2$ in the following way
$\g_2(t)=(1+g(t) , t)$,where $g(t)=o(\mid t\mid)$. Let
$A'_1=(x,f(x))$ for some $ x\neq 0$ . Then there is a unique point
$A'_2\in \g_2$ such that $O\in A'_1A'_2$. It is determined
uniquely by the identity
\begin{equation}\label{11}
\frac{f(x)}{x}=\frac{t}{1+g(t)}=h(t)
\end{equation}
since $h'(0)=1$, i.e. the function $h(t)$ is strictly increasing
near $0$. For any $x\neq 0$ near $0$ denote by $t(x)$ the unique
solution of (\ref{11}). Then $t(x)$ is a $C^1$-smooth function and
there are open neighborhoods $U_1$ of $A_1$ and $U_2$ of $A_2$
such that for any $A'_1=\g_1(x)\in U_1\setminus\{O\}, $ there is a
unique $A'_2=\g_2(t(x))\in U_2$ such that $O\in A'_1A'_2$. On the
other hand it follows from $f(x)=O(|x|^2)$ and $(\ref{11})$
that $t(x)=O(|x|)$ and therefore
$$d^2_O(A'_1,A'_2)=(1+g(t(x))-x)^2 +(t(x)-f(x))^2=1-2x+o(|x|)$$
for any $x\neq 0$ near $0$. Hence $(O,A_2)$ is not a point of local
extremum for $d_O$, a contradiction.

Conversely, suppose that the line $OA_2$ is not tangent to $\g_1$
at $O$. Then there are open neighborhoods $U_1$ of $O$ and $U_2$
of $A_2$ such that if $A'_1\in U_1, A'_2\in U_2$ and $O\in
A'_1A'_2$, then $A'_1=O$. Hence in this case
$d_O(A'_1,A'_2)=|OA'_2| = d(A'_2)$ and $(O,A_2)$ is a point of local
extremum for the function $d_O$ if and only if $A_2$ is a point of
local extremum for the function $d$.
\end{proof}

\begin{rem}\label{rem1} Suppose that the curves $\g_1$ and $\g_2$ are
respectively $C^{1,\alpha}$ and $C^{1,\beta}$-smooth, where $0\leq
\alpha, \beta \leq1 (C^{1,0}=C^1)$. Similar reasoning as above
shows that Proposition \ref{alpha}, where $\alpha=1, \beta=0$,
remains also true if $\alpha(1+\beta)>1$. To see what happens in
the case $\alpha(1+\beta)\leq 1, \beta \in (0,1] $ consider the
curves
$$\g_1:y=C|x|^{1+\alpha},\ \g_2:x=1+|y|^{1+\beta} , $$
where $C$ is a constant, and the points $A_1=O=(0,0) , A_2=(1,0)$.
Then $A_2$ is a point of global minimum of the function $d(A)=
|OA|, A\in \g_2$. If $\alpha(1+\beta)< 1$, then for any constant
$C$ the pair $(O,A_2)$ is not a point of local extremum  for $d_O$,
while for $\alpha(1+\beta)=1$ it is such a point if and only if
$\beta \in (0,1) , |C|\geq 1$ or $\beta=1,
|C|\ge\sqrt{\frac{2}{3}}$. Note that in all these cases $(O,A_2)$
is a point of local minimum for the function $d_O$.
\end{rem}
\smallskip

We next consider the points of local extremum for the function $d_{\g}$.

\begin{prop} \label{norm3} Let $\g_1 , \g_2$ and $\g$ be
$C^1$-smooth curves. Consider all pairs
$(A_1,A_2)$ such that $A_1\in \g_1 , A_2\in \g_2$
and $A_1A_2 \cap \g \neq \emptyset$. If
$(A_1,A_2)$ is a point of local
extremum for the function $d_{\g}$ then either the line $A_1A_2$ is
tangent to $\g$ or $A_1A_2$ is a common
normal to $\g_1$ and $\g_2$ at $A_1$ and $A_2$
correspondingly. Moreover, in both cases, if $A\in A_1A_2 \cap \g$, then the
normals $n_{A_1} , n_{A_2}$ and $n_A$ either intersect at one
point, or are parallel.
\end{prop}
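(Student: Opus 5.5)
Fix a point $A\in A_1A_2\cap\g$. The plan is to split according to whether the line $A_1A_2$ is tangent to $\g$ at $A$.

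\emph{Step 1: $A_1A_2$ is not tangent to $\g$ at $A$.} The constraint ``$A_1'A_2'$ meets $\g$'' is open near such a configuration. If $A_1'$ is near $A_1$ and $A_2'$ is near $A_2$, the line $A_1'A_2'$ is close to $A_1A_2$. Since $A_1A_2$ crosses $\g$ transversally at $A$, the nearby line still meets $\g$ near $A$; this follows from the implicit function theorem applied to the signed distance of $\g(s)$ from the line. So every pair $(A_1',A_2')$ in a neighbourhood is admissible.

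Then $(A_1,A_2)$ is a free local extremum of $|A_1'A_2'|$ on $\g_1\times\g_2$. Fixing $A_2$ and varying $A_1'$, $A_1$ is a local extremum of the distance from $A_2$ to points of $\g_1$. Since $A_1\neq A_2$ (implicit in the setting), Corollary \ref{cor3} gives $n_{A_1}=A_1A_2$. Symmetrically, $n_{A_2}=A_1A_2$, so $A_1A_2$ is a common normal. The three lines $n_{A_1}$, $n_{A_2}$ and $n_A$ then pass through $A_1$ and $A_2$ and through $A$ respectively. Since $n_A\neq A_1A_2$ (the line is not tangent to $\g$), the first two coincide with $A_1A_2$ and $n_A$ meets it at $A$. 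Hence all three pass through $A$, or they are parallel if $n_A\parallel A_1A_2$, which would mean $A_1A_2$ is tangent.

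\emph{Step 2: $A_1A_2$ is tangent to $\g$ at $A$.} If it is tangent at some point of $A_1A_2\cap\g$, the first alternative of the statement already holds. The ``moreover'' part asks for concurrency with $n_A$ at that tangency point. Here I would restrict to the subfamily of pairs whose line is tangent to $\g$ near $A$. A pair in this subfamily is admissible, so $(A_1,A_2)$ is still a local extremum for the restricted problem.

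Since $\g$ is only $C^1$, I cannot invoke Theorem \ref{A} directly (it needs $\g$ of class $C^2$). Instead I would argue directly. Parametrize tangent lines by $A(s)=\g(s)$ and the unit tangent $T(s)$, and write $A_i(s)=\g(s)+e_i(s)T(s)$ via the implicit function theorem, assuming $A_1A_2$ is not tangent to $\g_i$. Then $|A_1A_2|=e_2-e_1$ and its derivative vanishes.

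When $\g$ is $C^2$ with $\kappa\neq0$, differentiating gives $A_i'(s)=(1+e_i')T+e_i\kappa N$. The normal to $\g_i$ at $A_i$ then meets $n_A$ at the point $A+\lambda_i N$ with $\lambda_i=(1+e_i')/\kappa$. The condition $e_1'=e_2'$ forces $\lambda_1=\lambda_2$, which is exactly concurrency of the three normals. The tangent-to-$\g_i$ cases are treated as in cases 1 and 3 of Theorem \ref{A}: either all three lines are parallel, or a contradiction arises via Lemma \ref{lem}.

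\emph{Main obstacle.} The hard step is Step 2 under mere $C^1$ smoothness of $\g$, together with points $A$ of zero curvature. There the parametrization of tangent lines by $s$ may fail to be a good parameter. I would first try the tangent angle $\theta$ as parameter, as suggested in the Appendix, when curvature does not vanish. Otherwise I would settle for the weaker conclusion, parallelism, as in Theorem \ref{A}(ii). If the intended reading of the ``moreover'' only covers the common-normal case, Step 1 already suffices and Step 2 reduces to the tangential Eves-type statement proved later in the paper.
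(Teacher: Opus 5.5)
Your Step 1 is the paper's argument for the non-tangent case: the constraint is open near a transversal crossing, and Corollary \ref{cor3} then applies at both ends. There is one harmless slip at its end. Non-tangency does not give $n_A\neq A_1A_2$; the two lines coincide when $A_1A_2$ is the normal to $\g$ at $A$. Likewise $n_A\parallel A_1A_2$ means $A_1A_2$ is that normal, not a tangent. It does not matter, since $A$ lies on all three lines in every case.

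The genuine gap is Step 2. In the tangent case the statement asserts that $n_{A_1}$, $n_{A_2}$, $n_A$ are concurrent or parallel, with $\g$ merely $C^1$ and no curvature assumption. You obtain this only when $\g$ is $C^2$ with $\kappa_{\g}(A)\neq0$. Settling for a weaker conclusion does not prove the proposition. Nor can the ``moreover'' be read as covering only the common-normal case, since it explicitly says ``in both cases''. The trouble is your choice of subfamily: lines tangent to $\g$ near $A$ form too thin a family.
\begin{itemize}
\item If $\g$ is straight near $A$, all these tangents coincide with $A_1A_2$. When $\g_1,\g_2$ cross $A_1A_2$ transversally, the restricted problem is then just the single pair $(A_1,A_2)$ and gives no information.
\item If $\g$ is $C^2$ with $\kappa_{\g}(A)=0$, your own differentiation gives $A_i'(0)=(1+e_i'(0))T$. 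Transversality then forces $e_1'(0)=e_2'(0)=-1$ automatically (cf.\ Remark \ref{rem2}(iii)), so the first-order condition is void.
\item Reparametrizing by the tangent angle $\theta$ of the Appendix does not help, because it requires $\kappa_{\g}\neq0$.
\end{itemize}

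The missing idea is to restrict instead to lines through the fixed point $A$. Every such line meets $\g$ at $A$, so the pairs admissible for $d_A$ are admissible for $d_{\g}$. Hence $(A_1,A_2)$ is also a local extremum of $d_A$. Now apply Corollary \ref{cor2} with $O=A$. It rests on Theorem \ref{A} for a circle centred at $A$, so it uses no smoothness or curvature of $\g$ at all. It gives that $n_{A_1}$, $n_{A_2}$ and the perpendicular to $A_1A_2$ through $A$ are concurrent or parallel. When $A_1A_2$ is tangent to $\g$ at $A$, this perpendicular is exactly $n_A$. This is how the paper handles the tangent case, in two lines, and it makes your Step 2 computation unnecessary.
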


\begin{proof} Note first that for any $A\in A_1A_2 \cap \g$ the
pair $(A_1,A_2)$ is a point of local extremum for the function
$d_A$. Hence it follows from Corollary \ref{cor2} that the normals
$n_{A_1}, n_{A_2}$ and the line through $A$ and perpendicular to
$A_1A_2$ either intersect at one point or are parallel. If
$A_1A_2$ is tangent to $\g$ at $A$, we are done. If it is not
tangent to $\g$ at $A$, then $(A_1,A_2)$ is a point of local extremum
for the the distance $d$ from $A_1$ to the
curve $\g_2$. This is so, because in this case for any point
$A'_2\in \g_2$ near $A_2$ the line $A_1A'_2$ intersects $\g$.
Hence it follows from Corollary \ref{cor3} that $A_1A_2$ is the
normal to $\g_2$ at $A_2$. The same reasoning shows that $A_1A_2$
is the normal to $\g_1$ at $A_1$. In this case the three normals meet at the point $A$ for any $A\in A_1A_2 \cap \g$ for trivial reasons. This completes the proof of the proposition.
\end{proof}

Finally, we consider the points of local extremum for the function
$d_{t_{\g}}$.

\begin{thm}\label{B} Let  $\g_1$ and $\g_2$ be $C^1$-smooth curves,
and $\g$ be a $C^2$-smooth curve with signed curvature
$\kappa_{\g}$. Consider all pairs $(A_1,A_2)$ such that
$A_1\neq A_2$ and the line $A_1A_2$ is tangent to $\g$ at some point
$A$. If $\kappa_{\g}(A)\neq 0$ and the function $d_{t_{\g}}$ has a
local extremum at $(A_1,A_2)$, then the normals
$n_{A_1},n_{A_2}$ and $n_A$ either intersect at one point, or are
parallel.
\end{thm}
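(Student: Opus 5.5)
Since $\kappa_{\g}(A)\neq 0$, the natural plan is to reduce Theorem \ref{B} to Theorem \ref{A} by replacing $\g$ with a curve whose normals are the tangents of $\g$. Locally near $A$ the tangent lines of $\g$ coincide with the normal lines of the \emph{involute} of $\g$. Let $\g(s)$ be parametrized by arc length with $A=\g(0)$, and set
$$\tilde\g(s)=\g(s)+(c-s)\,T(s)$$
for a constant $c$. By Frenet--Serret, $\tilde\g\,'(s)=(c-s)\kappa_{\g}(s)N(s)$. If $c\neq 0$ is chosen so that $c-s\neq 0$ near $s=0$, then $\tilde\g$ is regular near $0$, and its tangent at $\tilde\g(s)$ is parallel to $N(s)$. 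Its normal line there is therefore the line through $\tilde\g(s)$ in direction $T(s)$, which is exactly the tangent line $t_{\g(s)}$.

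The obstacle is smoothness. Here $\tilde\g$ is only $C^1$ in general, whereas Theorem \ref{A} asks for a $C^2$ curve. However, the proof of Theorem \ref{A} uses only the following: the implicit-function reparametrizations $t_1(s),\tau_2(s)$ along the normals, and the derivative of the unit normal, $N'=-\kappa T$, at the base curve. For the involute, the unit normal is $\pm T(s)$, which is $C^1$ with derivative $\pm\kappa_{\g}(s)N(s)$. So I would redo the argument of Theorem \ref{A} directly, using the parameter $s$ of $\g$, the line $L(s)=\{\g(s)+\lambda T(s)\}$, and oriented distances $d_i(s)$ defined by $A_i(s)=\g(s)+d_i(s)T(s)$. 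This avoids invoking Theorem \ref{A} for a $C^1$ curve.

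The main steps are then as follows.

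\emph{Reparametrization.} The function
$$H(t,s)=\langle \g_1(t)-\g(s),N(s)\rangle$$
is $C^1$, and $H(t,s)=0$ exactly when $\g_1(t)\in t_{\g(s)}$. We have $H'_t=\langle\g_1',N\rangle\neq 0$ precisely when $A_1A_2$ is not tangent to $\g_1$. Also
$$H'_s=-\kappa_{\g}(s)\langle \g_1-\g,T\rangle=-\kappa_{\g} d_1,$$
and this is nonzero exactly when $A_1\neq A$. This is the analogue of Lemma \ref{l1}, with $A$ playing the role of the center of curvature.

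\emph{Case analysis.} I follow Theorem \ref{A}, splitting according to whether $A_1A_2$ is tangent to $\g_1$ at $A_1$, to $\g_2$ at $A_2$, both, or neither.

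\emph{Generic subcase.} Suppose neither tangency holds and $A\neq A_1,A_2$. Differentiating $A_i(s)=\g(s)+d_i(s)T(s)$ gives
$$A_i'(0)=(1+d_i'(0))\,T+d_i(0)\kappa N.$$
The normal $n_{A_i}$ meets $n_A$ (the line through $A$ along $N$) at the point $A+\mu_i N$. Orthogonality to $A_i'(0)$ gives
$$\mu_i=\frac{1+d_i'(0)}{\kappa}.$$
Extremality gives $d_1'(0)=d_2'(0)$, hence $\mu_1=\mu_2$, so the three normals are concurrent.

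\emph{Degenerate subcases.} These mirror Theorem \ref{A}. When $A=A_2$, or when $A_1A_2$ is tangent to $\g_1$ but not to $\g_2$, I would use $s_1(t)$ with $s_1'(t_0)=0$ and Lemma \ref{lem}. This yields that $A_1A_2$ is normal to $\g_1$, which either gives concurrency at $A$ or a contradiction. When both tangencies hold, all three normals are parallel.

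I expect the main work to lie in confirming that these degenerate subcases transfer verbatim. In particular, when $A=A_i$ the derivative $\tau_2'(0)$ must vanish, and this follows from $H'_s=0$ there.
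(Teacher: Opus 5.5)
Your argument is correct, but it is not the paper's main proof. It is essentially the alternative route sketched in Remark~\ref{rem2}(ii): your value $\mu_i=(1+d_i'(0))/\kappa_{\gamma}(A)$ is exactly the formula $\overrightarrow{AB_i}=\frac{(1+d_i'(0))N}{\kappa_{\gamma}(A)}$ stated there.

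The paper instead applies Theorem~\ref{A} directly to the involute $\widetilde\gamma(s)=\gamma(s)-(s+a)\gamma'(s)$, and the smoothness obstacle you identify is not real. The map $s\mapsto\widetilde\gamma(s)$ is indeed only $C^1$. However, the unit tangent of $\widetilde\gamma$ is $\pm N(s)$, which is $C^1$ in $s$. Also $d\widetilde s/ds=|s+a|\,|\kappa_{\gamma}(s)|$ is continuous and nonzero, so $s$ is a $C^1$ function of the involute's own arc length $\widetilde s$. Hence $\widetilde\gamma$ is $C^2$ in $\widetilde s$. It has nonzero curvature at $\widetilde A$ with center of curvature $A$, so the line $n_{C_{\widetilde A}}$ of Theorem~\ref{A} is $n_A$. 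The statement then follows at once, degenerate cases included. (The paper checks the $C^2$ regularity by a slightly different computation, via the slope $\widetilde q\,'/\widetilde p\,'=-p'/q'$.)

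What each approach buys: the paper's reduction is short, because the whole case analysis is inherited from Theorem~\ref{A}. Your direct route is self-contained and locates the common point explicitly as $A+\frac{1+d_1'(0)}{\kappa_{\gamma}(A)}N$, but you must redo the degenerate cases. Two small points there:
\begin{itemize}
\item When $A=A_2$ and $A_1A_2$ is not tangent to $\gamma_1$, you have $s_1'(t_0)\neq 0$. The fact $u'(t_0)=0$ comes instead from $\tau_2'(0)=0$, as you note at the end.
\item When $A_1A_2$ is tangent to $\gamma_1$, you must split off the subcase $A_1=A$. There $H'_s(t_0,0)=0$, so $s_1$ is not available. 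But then $n_{A_1}=n_A$, and concurrency with $n_{A_2}$ is immediate, as in the case $A_1=C_A$ of Theorem~\ref{A}.
\end{itemize}
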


\begin{proof} Let $s$ be the arc-length parameter of $\g$ with
$A=\g(0)$. Denote by $\widetilde{\g}$ the involute of $\g$ defined
by $\widetilde{\g}(s)= \g(s)-(s+a)\g\,'(s)$, where $a\neq 0$. We
will first prove that the curve $\widetilde{\g}$ is $C^2$-smooth
near $ \widetilde{A}=\widetilde{\g}(0)$. To do this, set
$\g(s)=(p(s), q(s))$ and $\widetilde{\g}(s)=(\widetilde{p}(s),
\widetilde{q}(s))$. Then $\widetilde{p}'(s)=-(s+a)p\,''(s) \  , \
\widetilde{q}'(s)=-(s+a)q\,''(s)$. We may assume that $q\,'(0)\neq 0$
Then the identity  $p\,'p\,''+q\,'q\,''=0$ and
$\kappa_{\g}(A)\neq 0$ imply that $p\,''(0)\neq 0$ and we get
from above that, for $s$ near $0,$
$$\frac{\widetilde{q}'(s)}{\widetilde{p}'(s)}=
\frac{q\,''(s)}{p\,''(s)}=-\frac{p\,'(s)}{q\,'(s)}.$$ Hence the function
$f=\displaystyle\frac{\widetilde{q}'}{\widetilde{p}'}$ is
$C^1$-smooth with respect to $s$ and therefore it is also
$C^1$-smooth with respect to the arc-length parameter $\widetilde
s$ of $\widetilde{\g}$. Since $(\widetilde p\,'(\widetilde
s))^2=\displaystyle\frac{1} {1+f^2(\widetilde s)}$ and $\widetilde
p\,'(0)\neq 0,$ we conclude that $\widetilde p\,'$ is $C^1$-smooth,
i.e. $\widetilde p$ is $C^2$-smooth w.r.t. $\tilde s$ near $0.$
The same is true for $\widetilde q$ as well.

We are now ready to prove the theorem. It is well-known that
$\kappa_{\widetilde{\g}}(\widetilde{A})=\displaystyle\frac{1}{a}\neq
0$ and $A$ is the center of curvature $C_{\widetilde{A}}$ of
$\widetilde{\g}$ for $\widetilde{A}$. Note also that the normal to
$\widetilde{\g} $ at a point $\widetilde{\g}(s)$ is the tangent of
$\g$ at $\g(s)$. Hence Theorem \ref{B} follows from Theorem \ref{A}.
\end{proof}

\begin{rem}\label{rem2}
(i) Theorem \ref{A} and  Theorem \ref{B} are in a sense equivalent
since Theorem \ref{B} (resp. Theorem \ref{A} ) is a consequence of
Theorem \ref{A} (resp. Theorem \ref{B}) applied to an involute
$\widetilde{\g}$ (the evolute $\widehat{\g}$) of the $C^2$-smooth
curve $\g$. In the first case we proved that $\widetilde{\g}$ is
$C^2$-smooth, whereas in the second case, the $C^2$-smoothness of
$\widehat{\g}$ can be ensured only if $\g$ is $C^3$-smooth and
$\kappa'_{\g}\neq 0.$ More precisely, one can prove that if a
curve $\g$ is $C^2$-smooth and $\kappa_{\g}(A)\neq 0$ for some
$A\in \g$, then its evolute $\widehat{\g}$ is $C^2$-smooth near
$C_A$ if and only if $\g$ is $C^3$-smooth near $A$ and
$\kappa'_{\g}(A)\neq 0$. Note that in this case, we have the
following formula for the signed curvature of
$\widehat{\g}$:

\begin{equation}\label{15}
\kappa_{\widehat{\g}}(C_A)=-\frac{\kappa^3_{\g}(A)}{\kappa'_{\g}(A)}.
\end{equation}

(ii) Theorem \ref{B} can be proved also by using similar arguments
as in the proof of Theorem \ref{A}. The key point in this approach
is the following analog of formulas (\ref{D1}) and (\ref{D2}) :
$$\overrightarrow{AB_i}= \frac{(1+d'_i(0))N}{\kappa_{\g}(A)}, i=1,2,$$
where $B_i=n_{A_i}\cap n_A.$
\

(iii) Consider the case $\kappa_{\g}(A)=0$ in Theorem \ref{B} when
$A\neq A_1,A_2$ and the line $A_1A_2$ is not tangent to $\g_1$ and
$\g_2$ at $A_1$ and $A_2$, respectively. Then writing $d_{t_{\g}}$
as a function of the arc-length parameter $s$ of $\g$ we see that
$d'_{\g}(0)=0$, i.e. $(A_1,A_2)$ is a critical point of
$d_{t_\g}$. Examples show that it can be a point of local minimum,
a point of local maximum, or not be a point of local extremum of
$d_{t_{\g}}$.
\end{rem}
\smallskip

Finally,  we consider the case $\g_2=\g$, when the function
$d_{t_{\g}}$ is defined for pairs of points $(A_1,A)$, such that
$A_1\in \g_1, A\in \g , A_1\neq A$ and the line $A_1A$ is tangent
to $\g$ at $A$.

\begin{cor}\label{cor4} Let $\g_1$ be a $C^1$-smooth curve and $\g$ be a
$C^2$-smooth curve with signed curvature $\kappa_{\g}$. Suppose
that the function $d_{t_{\g}}$ has a local extremum at $(A_1,A)$,
where $A_1\in \g_1, A\in \g , A_1\neq A$ and the line $A_1A$ is
tangent to $\g$ at $A$.

(i)  If $\kappa_{\g}(A)\neq 0$, then $n_A \cap n_{A_1}=C_A$,
where $C_A$ is the  center of curvature of $\g$ for $A$.

(ii) If $\kappa_{\g}(A)=0$, then $n_A\parallel n_{A_1}$, i.e
$A_1A$ is the tangent to $\g_1$ at $A_1$.
\end{cor}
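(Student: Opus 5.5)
The plan is to argue directly rather than quote Theorem \ref{B}. With $\g_2=\g$ and $A_2=A$ we have $n_{A_2}=n_A$, so two of the three lines in Theorem \ref{B} coincide and its conclusion says nothing. Instead I will redo the arc-length computation of Theorem \ref{A}, with normals replaced by tangents. Let $s$ be the arc-length parameter of $\g$ with $A=\g(0)$, and let $T(s),N(s)$ be the unit tangent and normal. The analogue of the function $G$ is
$$H(t,s)=\langle \g_1(t)-\g(s),N(s)\rangle .$$
Its zero set consists exactly of the pairs for which $\g_1(t)$ lies on the tangent line $t_{\g(s)}$. By the Frenet--Serret formulas $N'=-\kappa_\g T$ and $T'=\kappa_\g N$,
$$H'_t=\langle \g_1'(t),N(s)\rangle,\qquad H'_s=-\kappa_\g(s)\,\langle \g_1(t)-\g(s),T(s)\rangle .$$
At $(t_0,s_0)$ we have $A_1=A+d_1T$ with $d_1\neq 0$, because $A_1\neq A$. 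Hence $H'_s(t_0,0)=-\kappa_\g(A)d_1$, while $H'_t(t_0,0)=0$ exactly when $A_1A$ is tangent to $\g_1$ at $A_1$.

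\emph{Case 1: $A_1A$ is not tangent to $\g_1$ at $A_1$.} The implicit function theorem gives, as in Proposition \ref{nnorm1}(i), a $C^1$ function $t_1(s)$ near $0$ with
$$\g_1(t_1(s))=\g(s)+d_1(s)T(s).$$
The goal function is $|d_1(s)|$, and $d_1(0)\neq0$, so the local extremum forces $d_1'(0)=0$. Differentiating at $s=0$ gives
$$\frac{d}{ds}\g_1(t_1(s))\Big|_{s=0}=(1+d_1'(0))T+d_1(0)\kappa_\g(A)N=T+d_1\kappa_\g(A)N.$$

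For (i), where $\kappa_\g(A)\neq0$, I check that $C_A-A_1=-d_1T+\frac1{\kappa_\g(A)}N$ is orthogonal to this tangent vector: the inner product is $-d_1+d_1=0$. Hence $C_A\in n_{A_1}$. Moreover $n_{A_1}\neq n_A$, because the tangent vector above is not parallel to $N$. Therefore $n_A\cap n_{A_1}=C_A$.

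For (ii), where $\kappa_\g(A)=0$, the tangent vector of $\g_1$ at $A_1$ is parallel to $T$. So $A_1A$ is tangent to $\g_1$, which contradicts the case assumption. Thus in case (ii) only Case 2 below can occur, and the claim $n_{A_1}\parallel n_A$ holds there trivially.

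\emph{Case 2: $A_1A$ is tangent to $\g_1$ at $A_1$.} When $\kappa_\g(A)=0$ this is exactly the conclusion of (ii). The main obstacle is to rule this case out when $\kappa_\g(A)\neq0$; here I will mimic case 3 in the proof of Theorem \ref{A}. Since $H'_s(t_0,0)=-\kappa_\g(A)d_1\neq0$, the implicit function theorem gives a $C^1$ function $s_1(t)$ near $t_0$ with $H(t,s_1(t))=0$. Differentiating this identity and using $H'_t(t_0,0)=0$ gives $s_1'(t_0)=0$.

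The distance is then $d(t)=|\g_1(t)\g(s_1(t))|$, and it has a local extremum at $t_0$. Lemma \ref{lem}, applied with $\g_2=\g$ and $u=s_1$, says that $A_1A$ is the normal to $\g_1$ at $A_1$. A line cannot be both tangent and normal to $\g_1$ at $A_1$, since $\g_1'(t_0)\neq0$, so this case cannot occur. This completes (i), and together with Case 1 it gives (ii).
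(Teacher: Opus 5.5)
Your proof is correct. Part (ii) is essentially the paper's argument. The paper's function $F(t,s)=-q\,'(s)(p_1(t)-p(s))+p\,'(s)(q_1(t)-q(s))$ is exactly your $H$. The paper likewise gets $t(s)$ from the implicit function theorem, uses $d'(0)=0$ and $T'(0)=\kappa_{\g}(A)N(0)=0$, and reaches the same contradiction.

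For (i) you take a different route. The paper handles (i) in one line: it calls the case $A_1=C_A$ obvious, and otherwise invokes Corollary \ref{cor1} for the involute $\widetilde{\g}$, in the spirit of the proof of Theorem \ref{B}. You instead compute directly along the tangent-line reparametrization. This amounts to the formula of Remark \ref{rem2}(ii), $\overrightarrow{AB_1}=(1+d_1'(0))N/\kappa_{\g}(A)$, with $d_1'(0)=0$. You also treat the sub-case where $A_1A$ is tangent to $\g_1$ separately, by the argument of case 3 of Theorem \ref{A}.

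Your version is self-contained and needs neither the involute nor its $C^2$-smoothness. It is also the more reliable of the two, for three reasons:
\begin{itemize}
\item Read literally, Corollary \ref{cor1} for $\widetilde{\g}$ concerns the distance $|A_1\widetilde{A}|$ to the involute point and concludes $n_{A_1}=n_{\widetilde{A}}=t_A$. That is not the statement at hand.
\item Theorem \ref{A} for $\widetilde{\g}$ with $\g_2=\g$ is vacuous, for exactly the reason you noted for Theorem \ref{B}: there $n_{C_{\widetilde{A}}}=n_{A_2}=n_A$.
\item The case $A_1=C_A$ can never occur, since $A_1\in t_A\setminus\{A\}$ and $C_A\in n_A\setminus\{A\}$.
\end{itemize}

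One small slip: $n_{A_1}\neq n_A$ does not follow from $T+d_1\kappa_{\g}(A)N$ being non-parallel to $N$. The relevant fact is that this vector is not parallel to $T$, since its $N$-component $d_1\kappa_{\g}(A)$ is nonzero. More simply, $A_1=A+d_1T\notin n_A$. Either way, the two distinct lines through $C_A$ meet only at $C_A$.
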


\begin{proof} (i) If $A_1=C_A,$ it is obvious that $n_A \cap
n_{A_1}=C_A$. If $A_1\neq C_A,$ we may apply Corollary \ref{cor1}
to the involute of $\g$.

(ii) Suppose that $\kappa_{\g}(A)=0$ and
$A_1A$ is not the tangent to $\g_1$ at $A_1$.
Let $\g(s)=(p(s), q(s)), s \in \Delta$, and
$\g_1(t)=(p_1(t), q_1(t)), t\in \Delta_1$. Put
$$F(t,s) = -q\,'(s)(p_1(t) - p(s)) + p\,'(s)(q_1(t) - q(s)).$$
\noindent Clearly, $F(t,s)=0$ if and only if the line $\g(s)\g_1(t)$ is tangent to  $\g$ at $\g(s)$.
Reasoning as in Lemma \ref{l1} and Proposition
\ref{nnorm1} (with the function $G$ replaced by the function $F$) we derive that there exist an open interval $U \ni 0$ and a
$C^1$-smooth function $t: U \rightarrow \Delta_1$ such that $A_1=\g_1(t(0))$, and the line
$\g(s)\g_1(t(s))$ is tangent to $\g$ at the point
$\g(s)$ for every $s\in U$. Hence
$\g_1(t(s))= \g(s)+ d(s)T(s)$, where
$d(s)=\langle \g_1(t(s)) - \g(s), T(s)\rangle$ is the oriented distance from $A(s)=\g(s)$
to $A_1(s)=\g_1(t(s))$ with respect to the direction given by $T(s)$.
Since $A\neq A_1$, $s=0$ is a local extremum for $d(s)$. Hence, $d'(0)=0$. Note also that
$T'(0)=\kappa_{\g}(A)N(0)=0$ according to the Frenet-Serret formulas. Thus for the value of the derivative of $\g_1(t(s))$ at  $s=0$ we have

$$\g\,_1(t(0)) t'(0) =
\g\,'(0)+d'(0)T(0)+d(0)T'(0)=\g\,'(0).$$

\noindent I.e. the tangent vectors of $\g_1(t(s))$ and $\g(s)$  at $s=0$ are collinear. It follows that $A_1A$ is the tangent to $\g_1$
at $A_1$, a contradiction.
\end{proof}

\section{Appendix}

Let $\g(s), s\in \Delta$,  be a $C^2$-smooth curve parametrized by the arc-lenght parameter $s$.
If the curvature of $\g$ is not vanishing, every small connected segment of $\g$ can be viewed as a graph of a strictly convex (or strictly concave) function.
In this case, the tangential angle $\theta$ (the angle between the tangent at $\g(s)$ and the tangent at some fixed point, say $\g(0)$), is a strictly monotone function of $s$ which establishes a one-to-one correspondence between $s$ and $\theta$: $s=s(\theta)$ and $\theta = \theta(s)$, where $s(\theta)$ and $\theta(s)$ are smooth functions. The relation between $s$ and
$\theta$ is expressed through the Whewell intrinsic equation of a curve
\cite{WW}
$$\frac{ds(\theta)}{d\theta} =
\frac{1}{\kappa_{\g}(s)}= R_{\g}(s)$$
\noindent where $R_{\g}(s)$ is the signed radius of curvature of $\g$ at the point $\g(s)$.
Because of the monotonicity of $s$ and $\theta$ this equation can be rewritten  as
$$\frac{d\theta(s)}{ds} = \kappa_{\g}(s).$$

The considerations in the previous sections can be rearranged by taking
$\theta$ instead of $s$ as a natural parameter for the curve $\g$. If this is done, some of the formulas become simpler, and their interpretation gains geometrical clarity. For instance,   equation (\ref{7})
can be rewritten to become
$$\langle(\g_1(t_0) - C_{\g(s_0)}), N(s_0)\rangle=
\frac{G'_s(t_0,s_0)}{\kappa_{\g}(s_0)}= \frac{\partial G(t_0,s_0)}{\partial s}\, . \, \frac{ds(\theta)}{d\theta}=\frac{\partial G(t_0,s_0)}{\partial \theta}.$$
\noindent This offers the obvious interpretation that the partial derivative of
$G(t, s(\theta))$ with respect to
$\theta$ at the point $(t_0,s(\theta_0))$ as the signed distance from
$\g_1(t_0)$ to the center of curvature of $\g$ at the point $\g(s(\theta_0))$.

\noindent Equation (\ref{D1})
 gets the form (for $s=0$)
$$\overrightarrow{C_AB_1} =-\frac{\frac {d(d_1(s))}{ds}}{\frac{d\theta}{ds}}T =-\frac{d(d_1(s))}{d\theta}T .$$
 Therefore, the vector $\overrightarrow{C_AB_1}$ can be considered as a geometric expression of the derivative of $- d_1(s)$ with respect to $\theta$ at the point $s=0$. Similarly, the vector
 $\overrightarrow{C_AB_2}$ can be considered as a geometric expression of the derivative of $- d_2(s)$ with respect to $\theta$ at the point $s=0$.  Accordingly, the vector $\overrightarrow{B_1B_2}$ is the geometrical equivalent of the derivative of the function
$d(s)=d_2(s)-d_1(s)$ with respect to $\theta$ at the point $s=0$,   and this derivative vanishes just when $B_1 =B_2$ (see Figure 3 where the dashed lines are the normals).

\begin{figure}[h]
  \centering
  \includegraphics[width=6cm]{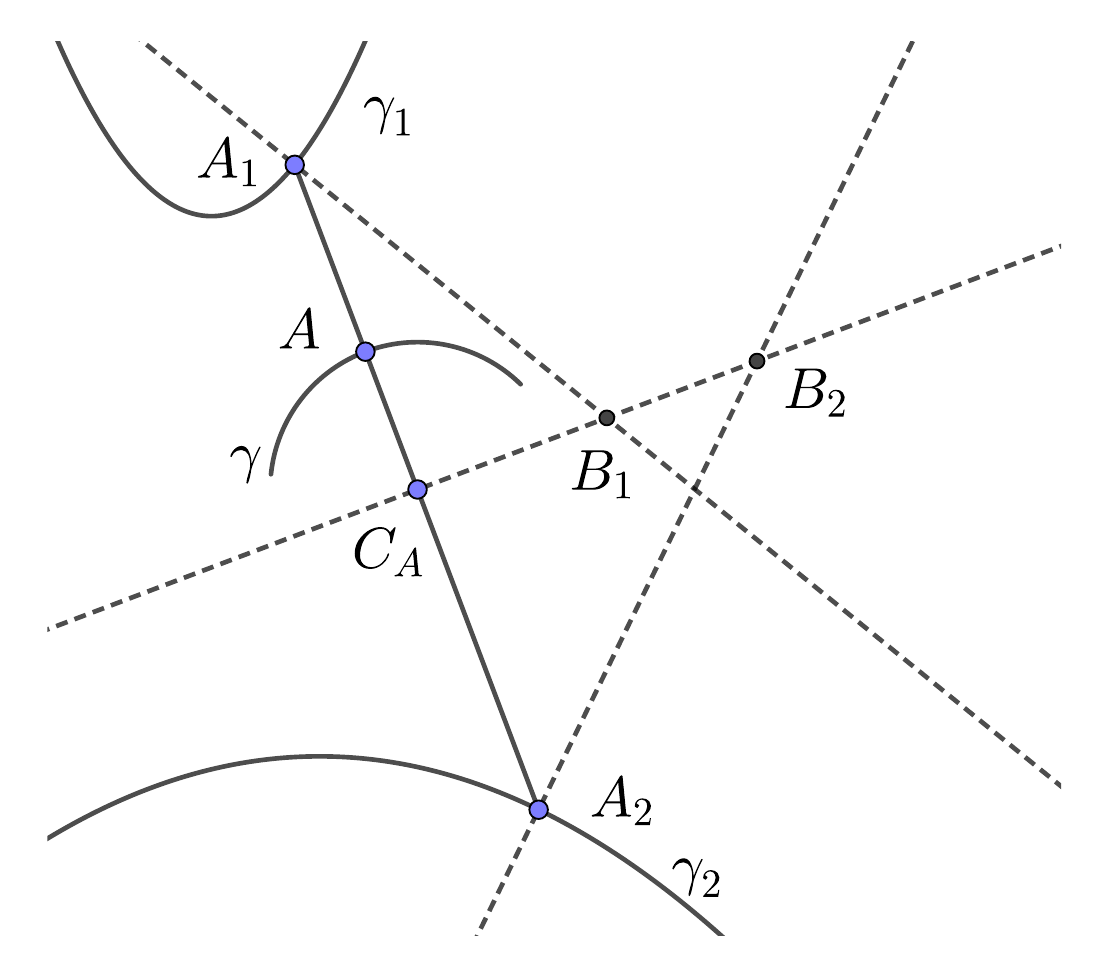}
  \caption*{Fig. 3}
\end{figure}
\vspace{5cm}

Further, since
$$\kappa_{\g}'(s) =\frac{ d \kappa_{\g}(s) }{ ds }=
- \frac{  1}{ R_{\g}^2(s) }.\frac{d R_{\g}(s) }{ds} =
- \kappa_{\g}^2(s)\frac{ dR_{\g}(s) }{ds},$$

equation (\ref{15}) gets the form

$$\frac{1}{R_{\widehat{\g}}(C_{\g(s)}) }=\kappa_{\widehat{\g}}(C_{\g(s)})= -\frac{\kappa^3_{\g}(s)}{\kappa'_{\g}(s)} =
\frac{\kappa_{\g}(s)}{ \frac{ dR_{\g}(s) }{ds} } =\frac{ 1}{ \frac{ dR_{\g}(s) }{d\theta} }.$$
Thus, equation (\ref{15}) is equivalent to the known (see \cite{WW}, p. 660) and
easy-to-remember formula relating the radii of curvature of the curves $\g$ and
$\widehat{\g}$:
$$R_{\widehat{\g}}(C_{\g(s)})= \frac{ dR_{\g}(s) }{d\theta}.$$


\end{document}